\numberwithin{equation}{section}
\definecolor{skyblue}{rgb}{0.85,0.85,1}
\newtheorem{lemma}{Lemma}
\newtheorem{theorem}{Theorem}
\newtheorem{cor}{Corollary}
\newtheorem{rem}{Remark}
\newtheorem{define}{Definition}
\numberwithin{lemma}{section}
\numberwithin{prop}{subsection}
\numberwithin{theorem}{section}
\numberwithin{cor}{section}
\numberwithin{conj}{section}
\numberwithin{rem}{section}
\DeclareMathOperator{\Mor}{Mor}
\newcommand{\bbC}{\mathbb{C}}
\newcommand{\bbR}{\mathbb{R}}
\newcommand{\bbZ}{\mathbb{Z}}
\newcommand{\vp}{\varphi}
\newcommand{\p}{\partial}
\DeclareMathOperator{\Gr}{Gr}
\newcommand{\eps}{\epsilon}
\newcommand{\Maslov}{\mathrm{Maslov}(\vp)}
\begin{document}

\title[Existence and stability of FitzHugh-Nagumo traveling waves]{On the Existence and Stability of Fast Traveling Waves in a Doubly-Diffusive FitzHugh-Nagumo System}
\author[P. Cornwell]{Paul Cornwell}
\email{pcorn@live.unc.edu}
\address{Department of Mathematics, UNC Chapel Hill, Phillips Hall CB \#3250, Chapel Hill, NC 27516}
\author[C.\,K.\,R.\,T.\ Jones]{Christopher K.\,R.\,T.\ Jones}
\email{ckrtj@email.unc.edu}

\begin{abstract}
The FitzHugh-Nagumo equation, which was derived as a simplification of the Hodgkin-Huxley model for nerve impulse propagation, has been extensively studied as a paradigmatic activator-inhibitor system. We consider the version of this system in which two agents diffuse at an equal rate. Using geometric singular perturbation theory, we prove the existence and stability of fast traveling pulses. The stability proof makes use of the Maslov index--an invariant of symplectic geometry--to count unstable eigenvalues for the linearization about the wave. The calculation of the Maslov index is carried out by tracking the evolution of the unstable manifold of the rest state using the timescale separation. This entails a careful consideration of how the transition from fast to slow dynamics occurs in the tangent bundle over the wave. Finally, we observe in the calculation that the Maslov index lacks monotonicity in the spatial parameter, which distinguishes this application of the Maslov index from similar analyses of Hamiltonian systems.
\end{abstract}

\maketitle

\tableofcontents

\section{Introduction}
Activator-inhibitor systems are of great interest to the scientific community as breeding grounds for patterns, traveling waves, and other localized structures. As shown by Turing \cite{Turing}, such structures often arise from the destabilization of a stable equilibrium in the presence of diffusion. A crucial mathematical question is whether these structures are stable as solutions of the PDE, since this intuitively determines whether they can be observed in nature. In this work, we investigate the stability of traveling pulses for a FitzHugh-Nagumo system. The main tool is an invariant of symplectic geometry called the Maslov index, which is used to prove the nonexistence of unstable eigenvalues for the linearization about the wave. The Maslov index is encoded in the twisting of the unstable manifold of the rest state, which we are able to calculate using techniques of geometric singular perturbation theory. The framework for the use of the Maslov index was established recently \cite{corn} for activator-inhibitor systems. One of the interesting contrasts between activator-inhibitor and gradient reaction-diffusion equations is that the Maslov index lacks monotonicity in the spatial parameter. As such, we observe multiple offsetting conjugate points in the Maslov index calculation.

The setting is the system of equations \begin{equation}\label{general PDE}
\begin{aligned} u_t & =u_{xx}+f(u)-v\\
v_t & = v_{xx}+\eps(u-\gamma v),
\end{aligned}
\end{equation} where $u,v\in\bbR$, and $x,t\in\bbR$ are space and time respectively. The function $f$ is the ``bistable'' nonlinearity $f(u)=u(1-u)(u-a)$, where $0<a<1/2$ is constant. We take $\eps>0$ to be very small, making this a singular perturbation problem. The stability of various traveling and standing fronts and pulses has been studied for variations of (\ref{general PDE}) in which there is either no diffusion on $v$, or the diffusion coefficient is a small parameter \cite{Jones84,Flo91,CdRS16,Yan85,AGJ,CH14}. An existence result for traveling pulses of (\ref{general PDE}) was obtained in \cite{CC15} using variational methods, but the issue of stability was unresolved. We offer another existence proof using geometric singular perturbation theory. The value of this proof is that the construction of the pulse provides the means for assessing the stability of the wave using the Maslov index.

Setting $z=x-ct$ to obtain a moving frame, a traveling wave $\varphi(z)=(\hat{u}(z),\hat{v}(z))$ is a steady state (i.e. $\hat{u}_t=\hat{v}_t\equiv0$) of the equation
\begin{equation}\label{traveling wave PDE}
\begin{aligned} u_t & =u_{zz}+cu_z+f(u)-v\\
v_t & = v_{zz}+cv_z+\eps(u-\gamma v),
\end{aligned}
\end{equation} which decays exponentially as $z\rightarrow\pm\infty$. We can then introduce the variables $u_z=w$ and $v_z=\eps y$ to convert the steady state equation for (\ref{traveling wave PDE}) to the first-order system \begin{equation}\label{FHN traveling wave ODE}
U_z=\left(\begin{array}{c}
u\\
v\\
w\\
y
\end{array}\right)_z=\left(\begin{array}{c}
w\\
\eps y\\
-cw-f(u)+v\\
-cy+\gamma v-u
\end{array}\right)=F(U).
\end{equation} This transformation makes the traveling wave a homoclinic orbit to a fixed point for (\ref{FHN traveling wave ODE}). We will use $\vp$ as a label for both the solution $\vp=(\hat{u},\hat{v})$ to (\ref{traveling wave PDE}) and the solution $\vp=(\hat{u},\hat{v},\hat{u}',\hat{v}'/\eps)$ of (\ref{FHN traveling wave ODE}); it will be clear from context which object is being referenced. We assume that the parameter $\gamma>0$ is small enough so that (\ref{FHN traveling wave ODE}) has only one fixed point at the origin. It follows that $\vp$--if it exists--must be homoclinic to $0\in\bbR^4$. The construction proves the stronger statement that $\vp$ is \emph{transversely constructed} in the following sense. With the equation $c'=0$ appended to (\ref{FHN traveling wave ODE}), the center-stable and center-unstable manifolds of $0$ are each three-dimensional. They intersect transversely in a one-dimensional set in $\bbR^5$, and $\vp$ is precisely this intersection. The Exchange Lemma \cite{JK94,Jones_GSP,JKT94} is key in proving that this transverse intersection exists.

Since (\ref{FHN traveling wave ODE}) is autonomous, any translate $\vp(z+k)$, $k\in\bbR$, of $\vp$ is also a traveling pulse solution of (\ref{traveling wave PDE}). Taking this into account, we use the following definition for the stability of $\vp$. \begin{define}\label{stability defn}
	The traveling wave $\vp(z)$ is \textbf{asymptotically stable} relative to (\ref{traveling wave PDE}) if there is a neighborhood $V\subset BU(\bbR,\bbR^2)$ of $\vp(z)$ such that if $u(z,t)$ solves (\ref{traveling wave PDE}) with $u(z,0)\in V$, then \[||\vp(z+k)-u(z,t)||_\infty\rightarrow 0 \] as $t\rightarrow\infty$ for some $k\in\bbR$.
\end{define} It is well-known \cite{BJ89,Henry} that the stability of such a wave is determined by the spectrum of operator \begin{equation}\label{linearized operator}
L=\p_z^2+c\p_z+\left(\begin{array}{c c}
f'(\hat{u}) & -1\\
\eps & -\eps\gamma
\end{array}\right)
\end{equation} obtained by linearizing (\ref{traveling wave PDE}) around $\varphi$. It is shown in \cite{CJ17} that the essential spectrum of $L$ is bounded away from the imaginary axis in the left half-plane, and thus the stability of $\varphi$ is determined by the eigenvalues of $L$. In \cite{CJ17}, it was established that there is a symplectic structure to the eigenvalue equations that allows one to define a Maslov index for $\vp$. It was then shown in \cite{corn} that the Maslov index gives an exact count of all unstable eigenvalues for $L$. Proving that $\vp$ is stable is therefore tantamount to proving that $\Maslov=0$.

The Maslov index \cite{Maslov,Arnold67,Arnold85} is a homotopy invariant assigned to curves of Lagrangian subspaces. Roughly speaking, the index counts how many times a curve of subspaces intersects the \emph{train} of a fixed subspace. Its relevance to stability analysis stems from the fact that the eigenvalue problem $Lp=\lambda p$ is a linear ODE, which induces equations on Grassmannians of all dimensions. For $\lambda\in\bbR$, the set of Lagrangian planes, called the \emph{Lagrangian Grassmannian} $\Lambda(2)$, is shown to be an invariant manifold for the equation induced on $\Gr_2(\bbR^4)$. Among the elements in $\Lambda(2)$ are the stable and unstable bundles, $E^s(\lambda,z)$ and $E^u(\lambda,z)$, consisting of the solutions that decay as $z\rightarrow\infty$ and as $z\rightarrow-\infty$ respectively. The limits of these solution spaces as elements in $\Lambda(2)$ are known (see \S 3 of \cite{AGJ}), so the eigenvalue problem can be recast as the problem of finding connecting curve segments between points in this space. This is the perspective of \cite{corn}, in which it is shown using a homotopy argument that the Maslov index of the curve $z\mapsto E^u(0,z)$ counts the number of unstable eigenvalues of $L$.

The intuition behind the application of the Maslov index described above comes from Sturm-Liouville theory. Consider the scalar equation $u_t=u_{xx}+F(u)$. Suppose that there is a steady front or pulse solution $\hat{u}$ to this equation, which corresponds to a heteroclinic or homoclinic orbit connecting two equilibria in phase space. It is a classic result (for example, \S 2.3 of \cite{KP13}) that the number of unstable eigenvalues associated with the linearization about this wave is equal to the number of critical points of the solution itself. Although there are several ways to prove this, the key to each proof is the fact that the derivative of the wave is a $0$-eigenfunction for the corresponding linear operator. Its zeros (i.e. critical points of $\hat{u}$) can be thought of as intersections of the unstable bundle (one-dimensional in this case) with the vertical subspace $\{0\}\times\bbR\subset\bbR^2$. 

In moving from a scalar equation to (\ref{general PDE}), the unstable bundle becomes two-dimensional, and the derivative of the wave is just one vector in that space. Nonetheless, a connection between the eigenvalue equation and the traveling wave ODE (\ref{FHN traveling wave ODE}) persists. Written as a first order system, the eigenvalue equation $Lp=0$ becomes \begin{equation}\label{linearization around wave}
\left(\begin{array}{c}
p\\
q\\
r\\
s
\end{array}\right)'=\left(\begin{array}{c c c c }
0 & 0 & 1 & 0\\
0 & 0 & 0 & \eps\\
-f'(\hat{u}) & 1 & -c & 0\\
-1 & \gamma & 0 & -c
\end{array}\right)\left(\begin{array}{c}
p\\q\\r\\s
\end{array}\right).
\end{equation} This is precisely the variational equation for (\ref{FHN traveling wave ODE}) along $\vp$. It is then known (cf. \S 6 of \cite{CJ17}) that the unstable bundle $E^u(0,z)$ is parallel to the tangent space $T_{\vp(z)}W^u(0)$ for all $z\in\bbR$. On the one hand, this is the motivation for studying the Maslov index in this context; it is possible to ascertain spectral information from how the wave (or, in this case, an invariant manifold associated with the wave) is situated in phase space. On the other hand, the relationship between the linear and nonlinear equations provides a means to the end of \emph{calculating} the Maslov index if, for some reason, more information about the nonlinear object $W^u(0)$ is known than about the linear object $E^u(0,z)$. For example, in singularly perturbed systems, Fenichel theory \cite{Fen79,Jones_GSP} and subsequent developments provide a means for tracking invariant manifolds throughout phase space. By contrast, the timescale separation makes the eigenvalue problem itself no more or less tractable.

The thrust of this paper is therefore to calculate the Maslov index by using the timescale separation of (\ref{FHN traveling wave ODE}). To our knowledge, this is the first instance in which a complete calculation of the Maslov index is used to prove that a traveling wave for a system of equations is stable. In \cite{BJ}, the Maslov index is used to study the stability of traveling waves for a FitzHugh-Nagumo system coupled to an ancillary ODE. However, in that case the Evans function--not the Maslov index--is the primary ingredient in the proof. Arguments similar to \cite{Jones84} are used to show that the only potentially unstable eigenvalues are close to eigenvalues of traveling fronts for two reduced systems. The Maslov index is then used to study these reduced systems. In this work, the Maslov index is defined and calculated for the full traveling wave, and it alone is used to obtain the stability result. The Maslov index is also used in \cite{BCJLMS17} to study standing waves in gradient reaction-diffusion equations. In that work, the reversibility of the standing wave equation is used to prove the existence of a conjugate point, which in turn proves the existence of an unstable eigenvalue. However, this provides a lower bound on the number of unstable eigenvalues, as opposed to the exact count that we obtain. Moreover, we study traveling waves, and the reversibility symmetry is not present in this context.

A recent work \cite{CH14} studied standing waves for (\ref{general PDE}) using the Maslov index. Such waves are obtained as local minimizers of an energy functional, and the Maslov index is used in conjunction with this variational formulation to aid in the calculation of spectral flow for a family of self-adjoint operators. This is very different from the strategy employed in this paper, and indeed it alone is insufficient for determining the stability of the traveling waves found in \cite{CC15}. However, many of the ideas and techniques of \cite{CH14} are crucial to establishing the framework of \cite{corn} for using the Maslov index to count eigenvalues of $L$. Most notably, the proof that any unstable spectrum must be real can be adapted from the case of standing waves to that of traveling waves.

As discussed in \cite{corn}, system (\ref{traveling wave PDE}) is a \emph{skew-gradient} system, see \cite{Yan02a,Yan02b}. One of the challenges that distinguishes skew-gradient from gradient reaction-diffusion equations is that the Maslov index is generally not monotone in its parameters. This means that the curve in question can cross the train of the reference plane in different directions. Estimates relevant to the variational existence proof (cf. \S 2 of \cite{CC15}) can be used to show that the Maslov index for (\ref{general PDE}) is monotone in the spectral parameter $\lambda$. This is crucial in the ``Maslov = Morse" theorem of \cite{corn} which is used to prove stability in this work. However, we will see in \S 4 that monotonicity in $z$ is absent. Indeed, we will show that there are four conjugate points for the unstable bundle, but they offset in the Maslov index calculation to give $\Maslov=0$. This is partly what makes this example interesting, since it removes the possibility of using arguments as in \cite{BCJLMS17} to conclude that the Maslov index is nonzero from the existence of a single conjugate point. Nonetheless, \emph{as long as one can find all conjugate points}, monotonicity in the spatial parameter is not needed to make use of the Maslov index.

The rest of this paper is organized as follows. In \S 2, we prove the existence of the wave using geometric singular perturbation theory and the Exchange Lemma. That section contains a careful description of the ``singular orbit," which is useful in the Maslov index calculation. As a corollary, we also prove the existence of front solutions of (\ref{general PDE}) in different parameter regimes. In \S 3, the Maslov index of the traveling wave is defined, and the relevant theorems on stability are given. Finally, in \S 4 we complete the stability proof by calculating the Maslov index. Two appendices give background information on Pl\"{u}cker coordinates and the geometry of induced flows on Grassmannians.

\section{Existence of the Wave}
The existence of traveling pulse solutions of (\ref{general PDE}) was proved by Chen and Choi \cite{CC15} using variational techniques. We furnish another proof here using geometric singular perturbation theory, since the construction of the pulse in this manner is important for the stability analysis to follow. First, observe that the linearization about the fixed point $0$ is obtained by setting $u=0$ in (\ref{linearization around wave}). Since $f'(0)=-a$, a computation gives that the eigenvalues of the linearization at $0$ are given by \begin{equation}\label{evals of lin}\begin{aligned} \mu_1 & = -\frac{c}{2}-\frac{1}{2}\sqrt{c^2+2(\gamma\eps+a)+2\sqrt{(\gamma\eps-a)^2-4\eps}}\\
\mu_2 & = -\frac{c}{2}-\frac{1}{2}\sqrt{c^2+2(\gamma\eps+a)-2\sqrt{(\gamma\eps-a)^2-4\eps}}\\
\mu_3 & = -\frac{c}{2}+\frac{1}{2}\sqrt{c^2+2(\gamma\eps+a)-2\sqrt{(\gamma\eps-a)^2-4\eps}}\\
\mu_4 & = -\frac{c}{2}+\frac{1}{2}\sqrt{c^2+2(\gamma\eps+a)+2\sqrt{(\gamma\eps-a)^2-4\eps}}
\end{aligned}.
\end{equation} Supposing for the moment that $c<0$ is $O(1)$ in $\eps$ (which will be shown shortly), it is clear that for $\eps>0$ sufficiently small, we have \begin{equation}\label{eval inequality}
\mu_1<\mu_2<0<-c<\mu_3<\mu_4.
\end{equation} Furthermore, the eigenvalue $\mu_2$ approaches $0$ as $\eps\rightarrow0$. It follows that $W^u(0)$ and $W^s(0)$, the unstable and stable manifolds of $0$ respectively, are each two-dimensional. We denote by $V^s(0)$ and $V^u(0)$ the corresponding stable and unstable subspaces, which are tangent to $W^s(0)$ and $W^u(0)$ respectively at $0$.

The goal is to construct $\vp$ by showing that these manifolds intersect. Ideally, this would be accomplished by showing that the intersection exists when $\eps=0$, and then perturbing to the case $\eps>0$. However, we would need $W^u(0)$ and $W^s(0)$ to intersect transversely when $\eps=0$ to make this argument, as this would ensure that the intersection is not broken when $\eps$ is ``turned on." This is inevitably \emph{not} the case, since two two-dimensional submanifolds of $\bbR^4$ cannot intersect transversely in a one-dimensional set. To remedy this, we append the equation $c'=0$ to (\ref{FHN traveling wave ODE}) to obtain three-dimensional center-stable and center-unstable manifolds, $W^{cs}(0)$ and $W^{cu}(0)$. The phase space is now $\bbR^5$, and it follows from page 144 of \cite{Lee12} that the transverse intersection $W^{cu}(0)\pitchfork W^{cs}(0)$ is one-dimensional. Thus if we can prove that this transverse intersection exists when $\eps=0$, it would follow that it persists to the case $\eps>0$, proving the existence of the wave. Ironically, it will be necessary to use information from the perturbed system to conclude that the transverse intersection exists when $\eps=0$. The technical tool that makes this connection is the Exchange Lemma, which will be discussed in \S 2.3.

\subsection{Fast-Slow Structure}
By taking $0<\eps\ll1$, (\ref{FHN traveling wave ODE}) is seen to be singularly perturbed. Such systems are amenable to analysis by geometric dynamical systems techniques, owing to the work of Fenichel \cite{Fen79}. Broadly speaking, Fenichel theory provides a means for reconciling two reduced systems obtained by taking the limit $\eps\rightarrow0$ on different timescales (fast and slow). Introductions to this theory are found in \cite{Jones_GSP,Kuehn15}, and we refer the reader to these sources for explanations of the terminology used freely throughout this paper. 

The chosen scaling of $v_z$ makes (\ref{FHN traveling wave ODE}) a fast-slow system with three fast variables $(u,w,y)$ and one slow variable $v$. Setting $\eps=0$ in (\ref{FHN traveling wave ODE}), one arrives at the so-called layer problem \begin{equation}\label{FHN layer problem}
 \left(\begin{array}{c}
 u\\
 w\\
 y
 \end{array} \right)'=\left(\begin{array}{c}
 w\\
 -cw+v-f(u)\\
 -cy+\gamma v-u
 \end{array} \right).
 \end{equation} This is a smooth limit, and now $v$ plays the role of a parameter. Trajectories for (\ref{FHN layer problem}) will be close to those of (\ref{FHN traveling wave ODE}) (with the $v$-component included), provided they are not near critical points. The problem is that taking this limit actually generates a one-dimensional set of critical points for (\ref{FHN layer problem}). These points comprise the critical manifold $M_0$, which is given by \begin{equation}\label{critical mfd}
M_0=\{(u,v,w,y):v=f(u),w=0,y=\frac{1}{c}(\gamma v-u)\}.
\end{equation} $M_0$ is normally hyperbolic wherever $f'(u)\neq 0$, meaning that the linearization of (\ref{FHN layer problem}) about any point in $M_0$ has no eigenvalues with $0$ real part. The layer equation does not tell us anything about motion on $M_0$, since each point therein is fixed by definition. Instead, setting $\zeta=\eps z$ in (\ref{FHN traveling wave ODE}), the limit $\eps\rightarrow 0$ yields the slow flow \begin{equation}\label{slow flow}
\dot{v}=y=\frac{1}{c}(\gamma v-f^{-1}(v)),\hspace{.2 in}\left(\dot{ }=\frac{d}{d\zeta}\right)
\end{equation} which is restricted to $M_0$. By $f^{-1}$, we mean the inverse of $f$ restricted to one of three segments of the cubic $v=f(u)$, partitioned by the two zeros of $f'(u)$. Of particular interest are the two outer branches corresponding to the intervals on which $f(u)$ is strictly decreasing. We use the notation $M_0^L$ and $M_0^R$ for the left and right branches respectively.

Fenichel's Theorem asserts that the manifold $M_0$ perturbs to a nearby version $M_\eps$, provided that we are away from zeros of $f'$ (i.e. where normal hyperbolicity fails). This manifold is locally invariant and the flow on $M_\eps$ is given to leading order by (\ref{slow flow}). Actually, Fenichel derived a much stronger result. By computing the eigenvalues of the matrix in (\ref{linearization around wave}) as a function of $u$, one sees that each fixed point in $M_0^{L/R}$ for (\ref{FHN layer problem}) has two unstable and one stable eigenvalues, each of which generates an invariant manifold. Taking the union over $v$ in some compact subset of $M_0^{L/R}$, one obtains a three-dimensional $W^{u}(M_0^{L/R})$ and two-dimensional $W^{s}(M_0^{L/R})$ as subsets of $\bbR^4$. (By abuse of notation, $M_0^{L/R}$ here refers to those compact subsets.) The second result of Fenichel is that these invariant manifolds also perturb to locally invariant objects $W^{u/s}(M_\eps^{L/R})$. These objects play a crucial role in both the existence of the pulse and the calculation of the Maslov index.

\subsection{The Singular Solution}
Both the existence and stability of the fast traveling waves are heavily informed by the structure of the singular orbit which forms their template. It is therefore instructive to devote time to this object. The singular pulse is very similar to the corresponding object with no diffusion on $v$, see \cite{Jones84,JKL91}. We call that system the `3D system,' in reference to the dimension of phase space of the traveling wave equation. We begin with the layer problem (\ref{FHN layer problem}).
 Notice that the equations for $u$ and $w$ decouple from $y$, so the projection of any solution of (\ref{FHN layer problem}) onto the $uw-$plane will be (part of) a solution to \begin{equation}\label{3d layer problem}
 \left(\begin{array}{c}
u\\ w \end{array}\right)'=\left(\begin{array}{c}
w\\-cw+v-f(u)
\end{array}\right).
 \end{equation} This system is considered in the construction of traveling waves for the 3D system. It is shown in \cite{McKean70} that for $v=0$ and  \begin{equation}\label{singular c value}
 c=c^*:=\sqrt{2}(a-1/2)<0,
 \end{equation} there exists a heteroclinic orbit connecting the fixed point $(0,0)$ with the fixed point $(1,0)$. The explicit solution is given by \begin{equation}\label{McKean soln}
 u(z)=\frac{1}{\left(1+e^{-\frac{\sqrt{2}}{2}z}\right)},
 \end{equation} which in turn determines $w$. One can solve for $w$ as a function of $u$ to get the profile in phase space. It is easy to verify that \begin{equation}\label{fast jump profile}
 w(u)=\frac{\sqrt{2}}{2}u(1-u), \hspace{.2 in}0\leq u\leq 1
 \end{equation} is the profile of the fast jump, together with the fixed points. To show that the corresponding connection exists for (\ref{FHN layer problem}), consider the linearization of (\ref{FHN layer problem}) about $0$: \begin{equation}\label{layer lin 0}
 \left(\begin{array}{c}
   \delta u\\ \delta w\\ \delta y
 \end{array}\right)'=\left(\begin{array}{c c c}
0 & 1 & 0\\
a & -c^* & 0\\
-1 & 0 & -c^*
 \end{array}\right)\left(\begin{array}{c}
\delta u\\ \delta w\\ \delta y
 \end{array}\right).
 \end{equation} The eigenvalues of the matrix in (\ref{layer lin 0}) are \begin{equation}\label{layer lin 0 evals}
 \mu_1(0)=-a\sqrt{2}, \, \mu_3(0)=-c^*,\text{ and }\mu_4(0)=\frac{\sqrt{2}}{2},
 \end{equation} which one would also obtain by substituting $\eps=0$ and $c=\sqrt{2}(a-1/2)$ in (\ref{evals of lin}). We therefore have a two-dimensional unstable manifold, and we wish to find an intersection with the stable manifold of $p=(1,0,-1/c^*)$. As noted above, the $y$ direction is invariant--one can see from (\ref{layer lin 0}) that $[0,0,1]^T$ is the $-c-$eigenvector--so the unstable manifold is actually a cylinder over the heteroclinic connection for (\ref{3d layer problem}). To avoid confusion with the unstable manifold for (\ref{FHN traveling wave ODE}), we will call this set $W^u(0_f)$, where the subscript indicates that this is the origin for the fast subsystem (\ref{FHN layer problem}).
  
 If we now linearize about the landing point $p$, we obtain \begin{equation}\label{layer lin 1}
  \left(\begin{array}{c}
    \delta u\\ \delta w\\ \delta y
  \end{array}\right)'=\left(\begin{array}{c c c}
 0 & 1 & 0\\
 1-a & -c^* & 0\\
 -1 & 0 & -c^*
  \end{array}\right)\left(\begin{array}{c}
 \delta u\\ \delta w\\ \delta y
  \end{array}\right),
  \end{equation}
which still has two unstable and one stable eigenvalues given by \begin{equation}\label{layer lin 1 evals}
 \mu_1(p)=-\frac{\sqrt{2}}{2}, \, \mu_3(p)=-c,\text{ and }\mu_4(p)=\sqrt{2}(1-a).
 \end{equation} In particular, the $\delta y$ direction is still invariant and unstable. Now, any trajectory in the cylinder $W^u(0_f)$ (for $0<u<1$) must approach the invariant line $\{(u,w,y):u=1,w=0\}$ in forward time. Since this line moves points away from the equilibrium $p$, there will be some points in $W^u(0_f)$ for which $y\rightarrow+\infty$ and others for which $y\rightarrow-\infty$. A shooting argument in the cylinder therefore produces an orbit that is bounded, which can only approach $p$. This orbit coincides with the one-dimensional stable manifold of $p$.
 
 \begin{center}
 	\begin{figure}[h]
 		\includegraphics[scale=1]{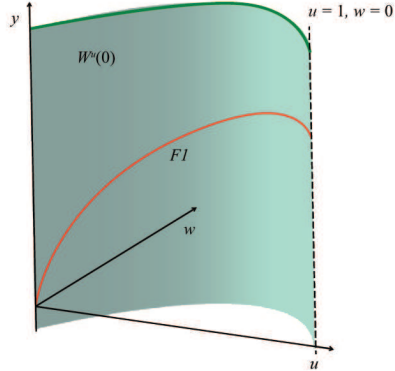}
 		
 		\caption{Unstable manifold $W^u(0)$ near the fast front with $v$ suppressed. The red curve is the fast jump.}
 		
 		\label{figfrontcyl}
 	\end{figure}
 \end{center} 

Before moving to the slow flow, we point out one important fact about the fast jump. Since $u(z)$ and $w(z)$ are also solutions of the 3D system, they must decay to $0$ as $z\rightarrow-\infty$ like $e^{\mu_4(0)z}$, since $\mu_4(0)$ is the only unstable eigenvalue for the linearization of (\ref{3d layer problem}) at $0$. On the other hand, $y$ satisfies \begin{equation}
 y'=-c^*y-u,
 \end{equation} which can be solved explicitly as \begin{equation}
 y(z)=Ke^{-c^*z}-e^{-c^*z}\int\limits_{-\infty}^{z}e^{c^*s}u(s)\,ds.
 \end{equation} The constant \begin{equation}\label{y soln constant}
 K=\int\limits_{-\infty}^{\infty}e^{c^*z}u(z)\,dz=\frac{2\pi}{\sqrt{2}\sin\left(\pi(1-2a)\right)}
 \end{equation} is uniquely determined by the requirement that $y$ is bounded at $\pm\infty$. It is then clear that \begin{equation}
 \lim\limits_{z\rightarrow-\infty}e^{c^*z}y(z)=K\neq 0.
 \end{equation} In other words, $u$ and $w$ decay more quickly than $y$ in backwards time, so the traveling front is asymptotically tangent to the $y-$axis. As far as traveling waves go, this is to be expected, since the velocity is generically tangent to the leading unstable direction in reverse time, see \cite{HomSan10}.
 
 Having established the existence of a connection between the left and right branches of $M_0$, the next step is to follow the slow flow up $M_0^R$. We rescale the independent variable as $\zeta=\eps z$ and again set $\eps=0$, to arrive at the reduced problem \begin{equation}
 \dot{v}=y, \hspace{.1 in}\left(\dot{ }=\frac{d}{d\zeta}\right)
 \end{equation} which is restricted to the critical manifold $M_0$. Since $y=-1/c^*>0$ at the landing point, $v$ will increase and move up the graph of the cubic $f$. Eventually it will reach a point $v^*$, for which (\ref{3d layer problem}) with $v=v^*$ and $c=c^*$ has a heteroclinic connection back to $M_0^L$. Using symmetries of the cubic, it can be shown that \begin{equation}\label{u-star}
 v^*=f(2/3(a+1)):=f(u^*).
 \end{equation} (See \S 3.1 of \cite{CdRS16} for more details.) We call the point $q:=(u^*,v^*,0,1/c^*(\gamma v^*-u^*))$ the \emph{jump-off} point for $M_0^R$. The same shooting argument as for the front proves that the back exists for (\ref{FHN layer problem}) as well. Finally, upon landing on $M_0^L$ at the point $u=2/3(a-1/2)$, the slow flow carries us back down to $0$, which is a fixed point for both the slow and fast systems.

\begin{center}
	\begin{figure}[h]
		\includegraphics[scale=1.5]{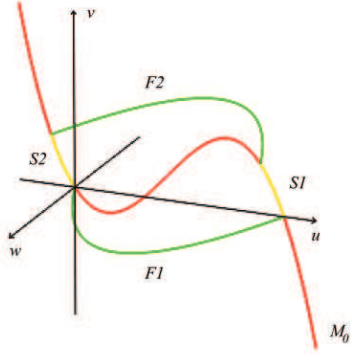}
		
		\caption{Picture of singular orbit, with $y$ suppressed. This orbit is identical to the singular orbit for the `3D system.' See \cite{JKL91}.}
		
		\label{fig3d system}
	\end{figure}
\end{center} 
\subsection{Transversality along the Front}

The pulse will ultimately be constructed by appealing to the Exchange Lemma \cite{JK94,KapJ01,JKT94}, which describes the passage of a shooting manifold near the right slow manifold (in this case $W^{cu}(0)$). For the rest of this section, any references to (\ref{FHN traveling wave ODE}) or its linearization (\ref{linearization around wave}) assume that the extra equation for $c'$ is included. Since $c$ is a center direction, the fixed point $0$ will have a three-dimensional center-unstable manifold $W^{cu}_\eps(0)$ and a three-dimensional center-stable manifold $W_\eps^{cs}(0)$, whose transverse intersection would be one-dimensional. Notice that the latter is still three-dimensional in the limit $\eps=0$, since the second stable direction becomes a center direction. We include the subscript $\eps$ to emphasize the dependence of these manifolds on $\eps$.

After appending the equation $c'=0$, the critical manifold is now two dimensional, parameterized by $v$ and $c$. If we think of the critical manifold as being the graph of a function $H(v,c)$, it is clear that its tangent space at a generic point $P$ is given by \begin{equation}
T_PM_0^{R/L}=\mathrm{sp}\left\{\p_v H,\p_c H\right\}=\mathrm{sp}\left\{\left[\begin{array}{c}
1/f'(u)\\
1\\
0\\
(1/c)(\gamma-1/f'(u))\\
0
\end{array}\right],\left[\begin{array}{c}
0\\
0\\
0\\
(-1/c^2)(\gamma v-u)\\
1
\end{array}\right]\right\}.
\end{equation} To highest order, the flow on $M_\eps^{L/R}$ for $\eps$ small or 0 will clearly be tangent to $\p_vH$, since $c$ is a parameter. This will be important later when we need to select a slow direction. For the sake of completeness, the flow on the critical manifold is now given by \begin{equation}\label{2d slow flow}
\dot{\left(\begin{array}{c}
v\\
c
\end{array}\right)}=\left(\begin{array}{c}
\frac{1}{c}(\gamma v-f^{-1}(v))\\
0
\end{array}\right).
\end{equation}

One of the hypotheses of the Exchange Lemma is that the shooting manifold $W^{cu}(0)$ transversely intersects $W^s(M_0^R)$ when $\eps=0$. We will show that this condition is satisfied in this section. To set things up, recall first that the two manifolds intersect along the fast jump, which we call \begin{equation}\label{fast jump label}
q_f(z)=(u(z),0,w(z),y(z),c^*).
\end{equation} To prove that this intersection is transverse, it suffices to check any one point along the orbit. A convenient place to check is very close to the landing point $p=(1,0,0,-1/c^*,c^*)$. We can therefore take $T_{q_f(z)}W^{s}(M_0^R)$ to be arbitrarily close to $T_pW^{cs}(p)$. This space is spanned by three vectors: the strong stable direction (which is the direction of the orbit), and the vectors $\p_v H$ and $\p_c H$, evaluated at $p$. $T_{q_f(z)}W^{cu}(0)$, on the other hand, is spanned by $q_f'(z)$, the invariant $y$ direction, and one more vector, which gives the change in $W^{cu}(0)$ as $c$ varies. To find this vector pick any point $q_f(z_0)$ on the fast jump. Since $c$ is a paramter (i.e. there is no flow in this direction), we can find a tangent vector $Y_0\in T_{q_f(z_0)}W^{cu}(0)$ of the form \begin{equation}\label{fast jump variation IC}
Y_0=(*,*,*,*,1).
\end{equation} We can then pick a curve $\alpha(c):(c^*-\delta,c^*+\delta)\rightarrow W^{cu}(0)$ such that $\alpha'(c^*)=Y_0$ and $\alpha(c^*)=q_f(z_0)$. By flowing the points on $\alpha(c)$ backwards in $z$, we obtain a one-parameter family of curves $\Gamma(z,c)$ in $W^{cu}(0)$. By construction, this family satisfies \begin{equation}
\p_c \Gamma(z,c)|_{c=c^*}\in T_{q_f(z)}W^{cu}(0)
\end{equation} for all $z\in\bbR$. Of interest then is the direction of the vector $\p_c\Gamma(z,c)|_{c=c^*}$ as $z\rightarrow\infty$ (i.e. as the jump approaches $p$). This is ascertained by observing that $\p_c\Gamma|_{c=c^*}$ satisfies the variational equation for (\ref{FHN traveling wave ODE}) along $q_f(z)$ with $\eps=0$. Indeed, using the equality of mixed partials, we have \begin{equation}
\p_z\left(\p_c \Gamma(z,c) \right)|_{c=c^*}=DF(\Gamma(z,c))\cdot\p_c\Gamma(z,c)|_{c=c^*}=DF(q_f(z))\cdot\left(\p_c\Gamma(z,c)\right)|_{c=c^*}.
\end{equation} Using the notation $\p_c\Gamma(z,c)|_{c=c^*}=(u_c(z),0,w_c(z),y_c(z),1)$, it follows that $W^{cu}(0)\pitchfork W^{s}(M_0^R)$ if and only if 
\begin{equation}\label{fast jump trans}
\det\left[\begin{array}{c c c c c}
u'(z) & u_c(z) & 0 & \frac{1}{a-1} & 0\\
0 & 0 & 0 & 1 & 0\\
w'(z) & w_c(z) & 0 & 0 & 0\\
y'(z) & y_c(z) & 1 & \frac{1}{c^*}\left(\gamma-\frac{1}{a-1}\right) & \left(1/c^*\right)^2\\
0 & 1 & 0 & 0 & 1
\end{array} \right]=u'(z)w_c(z)-w'(z)u_c(z)\neq 0
\end{equation} for $z\gg 1$. We are now prepared to prove transversality.

\begin{lemma}\label{F1 lemma}
The invariant manifolds $W_0^{cu}(0)$ and $W^s(M_0^R)$ intersect transversely along the fast jump $q_f(z)$.
\end{lemma}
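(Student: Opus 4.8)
The paragraph preceding the lemma has already reduced the assertion to a single scalar inequality: writing $q_f(z)=(u(z),0,w(z),y(z),c^*)$ for the fast jump and $(u_c(z),0,w_c(z),y_c(z),1)$ for the $c$-variation along it, I must show that
\[
\Delta(z):=u'(z)\,w_c(z)-w'(z)\,u_c(z)\neq 0\qquad\text{for }z\gg 1.
\]
My plan is \emph{not} to control the four functions $u',w',u_c,w_c$ separately (two of them, $u_c$ and $w_c$, are only pinned down up to a choice of representative for the variation), but to show that the combination $\Delta$ satisfies a scalar linear ODE that can be solved by inspection and whose coefficients force $\Delta$ to vanish at most once.

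To produce that ODE, I would first record the equations the ingredients obey along $q_f$. On the jump $v\equiv 0$ and $c=c^*$, so the layer equations give $u'=w$ and $w'=-c^*w-f(u)$; differentiating these relations in $c$ (equality of mixed partials, exactly as in the derivation preceding the lemma, and noting that the $v$-component of the variation vanishes) yields $u_c'=w_c$ and $w_c'=-w-c^*w_c-f'(u(z))\,u_c$. Differentiating $\Delta=u'w_c-w'u_c$ and substituting, the two terms containing $f'(u(z))$ cancel and the two terms equal to $w'w_c$ cancel, leaving
\[
\Delta'(z)=-c^*\,\Delta(z)-u'(z)\,w(z)=-c^*\,\Delta(z)-w(z)^2,
\]
the last equality using $u'=w$ on the jump. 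This cancellation is the heart of the argument and the one step I would write out in full; everything else is bookkeeping.

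The conclusion is then immediate: setting $G(z)=e^{c^*z}\Delta(z)$ turns the ODE into $G'(z)=-e^{c^*z}w(z)^2$, and since $0<u(z)<1$ for every finite $z$ the explicit profile $w(z)=\frac{\sqrt2}{2}u(z)(1-u(z))$ is strictly positive, so $G'<0$ on all of $\bbR$. Hence $G$ — and therefore $\Delta=e^{-c^*z}G$ — either never vanishes or vanishes at exactly one point; in either case $\Delta(z)\neq 0$ for all sufficiently large $z$, which is the required transversality, and since transversality along an orbit is independent of the point chosen this proves the lemma. Two loose ends I would check in passing: replacing the representative $Y_0$ of the $c$-variation in (\ref{fast jump variation IC}) adds a multiple of $(u',w')$ to $(u_c,w_c)$ and hence leaves $\Delta$ unchanged, so the statement is well posed; and the sign $c^*<0$ enters only through the positivity of $w$, not through any growth rate of $\Delta$. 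I do not expect a genuine analytic obstacle — no asymptotics or estimates are needed, because the monotonicity of $G$ already forbids $\Delta$ from being eventually zero — so the only place to be careful is the sign bookkeeping in the cancellation that produces the ODE for $\Delta$.
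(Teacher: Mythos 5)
Your reduction and your key computation are exactly the paper's: the quantity $\Delta=u'w_c-w'u_c$ is the two-form $du\wedge dw$ evaluated on $(q_f',\p_c\Gamma|_{c=c^*})$, and the ODE $\Delta'=-c^*\Delta-w^2$ you derive by differentiating the determinant is precisely the paper's Lie-derivative computation $(du\wedge dw)'=-c^*\,du\wedge dw-w\,du\wedge dc$ applied to those two vectors. Your observation that $\Delta$ is independent of the choice of representative $Y_0$ is also correct. So the approach is the same; the difference is only in how the ODE is exploited at the last step, and that is where you leave a gap.

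You conclude only that $G=e^{c^*z}\Delta$ is strictly decreasing, hence $\Delta$ has at most one zero and is nonzero for $z\gg1$. But the reason the transversality check is performed near the landing point $p$ is that the frame for $T_{q_f(z)}W^s(M_0^R)$ (the last two columns of the matrix in (\ref{fast jump trans})) is only the \emph{asymptotic} one, $T_pW^{cs}(p)$, with an error that is $o(1)$ as $z\to\infty$; meanwhile the vectors $q_f'(z)$ and $\p_c\Gamma$ spanning $T_{q_f(z)}W^{cu}(0)$ degenerate (one decays, one grows) as $z\to\infty$. To absorb the frame-approximation error one needs the \emph{normalized} determinant $e^{c^*z}\Delta(z)$ to have a nonzero limit — this is exactly the paper's remark that the factor $e^{c^*z}$ keeps the vectors bounded and nonzero, and that only their directions matter. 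Monotonicity of $G$ alone does not rule out the degenerate scenario $G(z)\downarrow 0$ as $z\to\infty$, in which $\Delta(z)\neq0$ at every finite $z$ yet the limiting frames fail to span. The missing ingredient is the boundary condition at $-\infty$: since $u',w'$ decay like $e^{\sqrt2 z/2}$ while $u_c,w_c$ remain bounded there, $e^{c^*z}\Delta(z)\to0$ as $z\to-\infty$, whence $e^{c^*z}\Delta(z)=-\int_{-\infty}^{z}e^{c^*s}w(s)^2\,ds<0$ for all $z$, with a strictly negative limit $L$ as $z\to\infty$. Adding this one line (which uses only the decay rates already established in \S 2.2) closes the gap and recovers the paper's proof verbatim.
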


\begin{proof}
Following the preceding discussion, the task is to show that $u'(z)w_c(z)-w'(z)u_c(z)\neq 0$ as $q_f(z)$ approaches the landing point $p$. Confirming this is a Melnikov-type calculation, which we verify using differential forms (cf. \S 4.3-4.5 of \cite{Jones_GSP}). Indeed this is natural, since the quantity of interest is $(du\wedge dw)$ applied to the first two columns of the matrix in (\ref{fast jump trans}). Derivatives of differential forms are computed in \cite{Jones_GSP} by relating them to the variational equation. The latter equation induces a derivation on exterior powers of $\bbR^n$, which is naturally dual to the space of differential forms. See \S 3 of \cite{CJ17} for more detail. Along the fast jump $q_f(z)$, one computes that \begin{equation}\label{two form deriv calc}
\begin{aligned}
(du\wedge dw)' & =du'\wedge dw+du\wedge dw' \\
& = dw\wedge dw+du\wedge(-f'(u)\,du-c^*dw-w\,dc)\\
& = -c^*du\wedge dw-w\,du\wedge dc.
\end{aligned}
\end{equation} The derivative in the preceding calculation refers to how the quantity $du\wedge dw$ changes when applied to two vectors evolving under the variational equation (\ref{linearization around wave}). In other words, it is the Lie derivative of the two-form $du\wedge dw$ along the vector field $F$, tangent to $q_f(z)$. Now set \begin{equation}
\alpha(z):=du\wedge dw(q_f'(z),\p_c\Gamma(z,c)|_{c=c^*})
\end{equation} Applying an integrating factor to (\ref{two form deriv calc}), we see that \begin{equation}
\p_z\left(e^{c^*z}\alpha\right)=-w^2e^{c^*z}.
\end{equation} Since $w\rightarrow 0$ faster than $e^{-c^*z}$ as $z\rightarrow-\infty$, this equation can be integrated to obtain \begin{equation}
\alpha(z)=-e^{-c^*z}\int_{-\infty}^{z}e^{c^*s}w^2\,ds,
\end{equation} from which it is clear that \begin{equation}
\lim\limits_{z\rightarrow\infty}e^{c^*z}\alpha=L<0.
\end{equation} The factor $e^{c^*z}$ ensures that the vectors $q_f'(z)$ and $\p_c\Gamma(z,c)|_{c=c^*}$ stay bounded and nonzero in the limit. The Lemma then follows, since it is the direction of these vectors (and not the magnitude) that is of interest.
\end{proof}
Notice that $\gamma$ played no roll in the result of this subsection. Accordingly, the following is a byproduct of the proof of the Lemma.
\begin{cor}
Assume that $\gamma>0$ is large enough so that $u=\gamma f(u)$ has three solutions $u_i$ satisfying $0=u_1<u_2<u_3$. Then for $\eps>0$ sufficiently small, (\ref{FHN traveling wave ODE}) possesses a heteroclinic orbit connecting the fixed points $(0,0,0,0)$ and $Q:=(u_3,u_3/\gamma,0,0)$ for $c=c^*+O(\eps)$. The heteroclinic orbit corresponds to a traveling front solution for (\ref{general PDE}) and is locally unique.
\end{cor}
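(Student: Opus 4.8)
The plan is to obtain the front as the $\eps>0$ perturbation of a singular heteroclinic orbit built exactly like the singular pulse, except that the last fast jump and slow descent to the origin are replaced by \emph{capture} at the new rest state $Q$. The single transversality input needed -- the one along the fast jump -- is already supplied by Lemma \ref{F1 lemma}, in whose proof $\gamma$ does not appear; this is why the corollary is a byproduct of that lemma.

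First I would assemble the singular orbit $\Phi_0$. The point $Q=(u_3,u_3/\gamma,0,0)$ is a rest point of (\ref{FHN traveling wave ODE}) precisely because $u_3$ solves $u=\gamma f(u)$, and the hypothesis that $u=\gamma f(u)$ has three roots forces $u_3$ onto the decreasing branch of the cubic to the right of the fold; thus $Q\in M_0^R$, and indeed $Q$ is the unique zero on $M_0^R$ of the slow vector field $\dot v=\frac1c(\gamma v-f^{-1}(v))$ of (\ref{slow flow}). From the landing point $p=(1,0,0,-1/c^*)$ of the fast jump $q_f(z)$, where $v=0$ and $\dot v=-1/c^*>0$, the slow flow on $M_0^R$ increases $v$ monotonically up to $v=f(u_3)$: writing $g(v)=\gamma v-f^{-1}(v)$ one has $g'(v)=\gamma-1/f'(u)>0$ on the right branch (where $f'<0$), so $g<0$ on $(0,f(u_3))$ and $g(f(u_3))=0$, and the same sign of $g'$ shows that $Q$ is asymptotically stable for the slow flow, attracting the entire sub-arc of $M_0^R$ joining $p$ to $Q$. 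So $\Phi_0$ is the concatenation of $q_f$ with this slow orbit, at $c=c^*$.

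Next, a dimension count. By (\ref{evals of lin})--(\ref{eval inequality}) the origin has two stable and two unstable eigenvalues (with $\mu_2$ a negative $O(\eps)$ eigenvalue); the analogous computation at $Q$, an honest equilibrium lying on $M_\eps^R$, yields its two fast stable/unstable directions together with a slow eigenvalue that is $O(\eps)$ and negative by the stability of $Q$ in the slow flow, hence again two stable and two unstable. Therefore $W^u(0)$ and $W^s(Q)$ are each two-dimensional in $\bbR^4$; after appending $c'=0$, $W^{cu}_\eps(0)$ and $W^{cs}_\eps(Q)$ are each three-dimensional in $\bbR^5$, and a transverse intersection of the two is one-dimensional -- a single flow-invariant curve along which $c$ is constant and, once it is known to lie $C^1$-close to $\Phi_0$, equal to $c^*+O(\eps)$.

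The crux is to produce this transverse intersection for $\eps>0$ small. Tracking $W^{cu}_\eps(0)$ through the fast jump by the regular perturbation of the layer flow away from the fold, it arrives in a neighborhood of $M_\eps^R$; by Lemma \ref{F1 lemma} and the openness of transversality -- with $W^{cu}_\eps(0)$ and $W^s(M_\eps^R)$ both $C^1$ in $\eps$ via Fenichel's theorem -- it does so transversely to $W^s(M_\eps^R)$ along a curve $C^1$-close to $q_f$. The delicate point, and the step I expect to be the main obstacle, is that at $\eps=0$ the center-stable manifold of $Q$ reduces to the strong-stable fibre of $Q$ and does not reach the landing region, so one cannot invoke persistence of a transverse intersection directly from $\eps=0$ -- this is exactly the regime the Exchange Lemma handles. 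The resolution is that for every fixed $\eps>0$ the slow flow on $M_\eps^R$ is genuinely nontrivial and $Q$ attracts the whole relevant sub-arc; hence, by the Fenichel invariant foliation of $W^s(M_\eps^R)$, the forward-stable set of $Q$ is the union of the strong-stable fibres over that sub-arc, so near the landing region $W^{cs}_\eps(Q)$ coincides locally with $W^s(M_\eps^R)$. Since $W^s(M_\eps^R)$ varies $C^1$ with $\eps$, the transversality supplied by Lemma \ref{F1 lemma} transfers to give $W^{cu}_\eps(0)\pitchfork W^{cs}_\eps(Q)$ along a curve. Being a one-dimensional, flow-invariant transverse intersection, this curve is a single orbit (together with its time-translates), hence the connection is locally unique; its $(u,v)$-components run from $(0,0)$ to $(u_3,u_3/\gamma)=(u_3,f(u_3))$, yielding the asserted traveling front. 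Equivalently, this is the very Exchange-Lemma construction used for the pulse, with the final jump and slow return to $0$ replaced by capture at the attracting equilibrium $Q$.
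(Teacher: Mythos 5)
Your proposal is correct and follows essentially the same route as the paper: identify $Q\in M_0^R$, invoke the transversality of $W^{cu}(0)$ with $W^s(M_0^R)$ from Lemma \ref{F1 lemma}, recognize (via Fenichel theory) that $W^{cs}_\eps(Q)$ agrees with $W^s(M_\eps^R)$ near the landing region because $Q$ attracts the relevant sub-arc of the slow manifold, and persist the transverse intersection to $\eps>0$ with local uniqueness from the Implicit Function Theorem. Your write-up merely supplies more detail than the paper does at the step where $W^{cs}_\eps(Q)$ is matched with $W^s(M_\eps^R)$, which the paper attributes to Fenichel theory in one line.
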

\begin{proof}
It is clear that (\ref{FHN traveling wave ODE}) has three fixed points for the prescribed values of $\gamma$, and that $Q\in M_0^R$ for $\eps=0$. A heteroclinic connection exists between the two points if the unstable manifold of $0$ intersects the stable manifold of $Q$. From Fenichel theory \cite{Fen79}, the limit as $\eps\rightarrow 0$ of $W_\eps^{cs}(Q)$ is exactly $W^s(M_0^R)$, which we just proved intersects $W^{cu}(0)$ transversely. The transverse intersection perturbs to the case $\eps>0$, and the orbit in question is given by the intersection. The local uniqueness and dependence of the speed $c$ on $\eps$ are a consequence of the Implicit Function Theorem.
\end{proof} We remark that the framework of \cite{corn} and the calculation in \S 4 can be adapted to show that the traveling front just obtained is stable in the sense of Definition \ref{stability defn}. However, we will not pursue that further here.
\subsection{Transversality along the Back and Completion of the Construction}
Armed with an understanding of $W^{cu}(0)$ as it moves along the front, we now turn our attention to the passage near the slow manifold $M^R_\eps$ and the back. As explained in \S2.2, the Nagumo back is a heteroclinic connection between the jump-off point $q=(u^*,v^*,0,(1/c^*)(\gamma v^*-u^*),c^*)$ and the point \begin{equation}
\hat{q}=(u^*-1,v^*,0,(1/c^*)(\gamma v^*-u^*+1),c^*)
\end{equation} on $M_0^L$. In $uw-$space, the equations take the form \begin{equation}
u_b(z)=u^*-u_f(z), \hspace{.1 in}w_b(z)=-w_f(z),
\end{equation} where $u_f,w_f$ are the components of the front. The important facts for this section are that $u_b$ is monotonically decreasing, and $w_b$ decreases to a minimum and then increases from there. Now we focus on the first slow piece connecting the fast jumps, which involves an application of the Exchange Lemma.

To state and use the Exchange Lemma, we first rewrite the traveling wave equations (\ref{FHN traveling wave ODE}) in Fenichel coordinates, see \cite{JK94,Jones_GSP} for more details. In a neighborhood $B$ of the slow manifold $M_\eps^R$, we can change coordinates so that (\ref{FHN traveling wave ODE}) takes the form \begin{equation}\label{Fenichel coords}
\begin{aligned}
\mathbf{a}'&=\Lambda(\mathbf{a},\mathbf{b},\mathbf{y},\eps)\mathbf{a}\\
\mathbf{b}'&=\Gamma(\mathbf{a},\mathbf{b},\mathbf{y},\eps)\mathbf{b}\\
\mathbf{y}'&=\eps\left(U+G(\mathbf{a},\mathbf{b},\mathbf{y},\eps)\right),
\end{aligned}
\end{equation} with $\mathbf{a}\in\bbR^2,\mathbf{b}\in\bbR,\mathbf{y}=(y_1,y_2)\in\bbR^2$, and $U=(1,0)$. The region $B$ can be taken to be of the form \begin{equation}
B=\{(\mathbf{a},\mathbf{b},\mathbf{y}):|\mathbf{a}|<\delta,|\mathbf{b}|<\delta,\mathbf{y}\in K\},
\end{equation} where $\delta$ is small and $K$ is a compact set containing the landing point $p$ and jump-off point $q$ in its interior. On account of normal hyperbolicity of $M_0^R$, we know that for sufficiently small $\delta$ the eigenvalues of $\Lambda(0,0,\mathbf{y},0)$ are real, positive and uniformly bounded away from $0$. Likewise, $\Gamma(0,0,\mathbf{y},0)<C_\delta<0$. The function $G$ in (\ref{Fenichel coords}) is bilinear in $\mathbf{a},\mathbf{b}$ due to the fact that the sets $\mathbf{a}=0$ and $\mathbf{b}=0$ are invariant. The special form of the $\mathbf{y}$ component is obtained by rectifying the flow on the slow manifold. It is clear that for this problem $U$ is the ``straightened out" graph of the cubic for fixed $c$--that is, in the direction $\p_v H$--since there is no change in $c$ in the trajectory through any point.

The $(C^1)$ Exchange Lemma describes the configuration of a manifold of trajectories that spends a long time near $M_\eps^R$ before leaving the neighborhood $B$. The manifold of interest in our case is $W_\eps^{cu}(0)$, which the reader will recall is three-dimensional. (The subscript serves to emphasize the $\eps$-dependence.) The following statement of the Exchange Lemma is specialized to the setting of (\ref{FHN traveling wave ODE}). For the general statement and proof, the reader is directed to \cite{JK94}, or \cite{Kuehn15} for a sketch.
\begin{theorem}[``Exchange Lemma" of \cite{JK94}]\label{Exchange lma}
Assume that $W_0^{cu}(0)\pitchfork W^s(M_0^R)$. Let $J$ be a compact segment of the trajectory through $p$ for the limiting slow flow (\ref{2d slow flow}) that contains $q$. Then \begin{enumerate}
\item For any $r_0\in W^u_0(J)\cap \p B$, there exists $q_\eps\in W^{cu}_\eps(0)\cap\p B$ and a time $T_\eps>0$ such that $q_\eps\cdot T_\eps\in\p B$ and $|q_\eps\cdot T_\eps-r_0|=O(\eps).$ Furthermore, $T_\eps=O(\eps^{-1})$.
\vspace{.1 in}
\item Let $\bar{q}\in W_\eps^{cu}(0)\cap\{|\mathbf{a}|=\delta\}$ be the exit point of a trajectory through $q\in W_\eps^{cu}(0)\cap\{|\mathbf{b}|=\delta\}$ that spends time $T=O(\eps^{-1})$ in $B$. Let $V\subset W^{cu}_\eps(0)$ be a neighborhood of $q$. Then the image of $V$ under the time $T$ map is $O(\eps)$-close in $C^1$ norm to $W^c(J)$ in a neighborhood of $\bar{q}$.
\end{enumerate}
\end{theorem}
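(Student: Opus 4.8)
The plan is to establish both parts of the Exchange Lemma by passing to the Fenichel coordinates (\ref{Fenichel coords}) — which have already been set up — and combining Gronwall-type estimates on the flow with a tracking argument for tangent planes of exactly the type used in the proof of Lemma \ref{F1 lemma}. Write a trajectory that remains in $B$ as $\gamma(t)=(\mathbf{a}(t),\mathbf{b}(t),\mathbf{y}(t))\in\bbR^2\times\bbR\times\bbR^2$. From the sign and uniform boundedness conditions on $\Lambda$ and $\Gamma$ recorded after (\ref{Fenichel coords}), there are constants $0<\beta\le\kappa$ (uniform on $B$ once $\delta$ is small) with $|\mathbf{b}(t)|\le\delta e^{-\beta t}$ and $|\mathbf{a}(0)|e^{\kappa t}\le|\mathbf{a}(t)|\le|\mathbf{a}(0)|e^{3\kappa t}$, and since $G$ is bilinear in $(\mathbf{a},\mathbf{b})$, hence $O(\delta)$ on $B$, the slow component obeys $\mathbf{y}(t)=\mathbf{y}(0)+\eps t\,U+O(\eps\delta t)+O(\eps^2 t)$. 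Consequently a trajectory entering with $|\mathbf{b}(0)|=\delta$ that must stay in $B$ for time $T=O(\eps^{-1})$ before exiting with $|\mathbf{a}(T)|=\delta$ necessarily has $|\mathbf{a}(0)|=e^{-O(1/\eps)}$; its $\mathbf{b}$-component is then exponentially small over the bulk of the passage, and $\mathbf{y}$ tracks the reduced flow (\ref{2d slow flow}) to within $O(\eps)$, so it exits $O(\eps)$-close to the far endpoint of $J$. The hypothesis $W_0^{cu}(0)\pitchfork W^s(M_0^R)$ enters because $W^s(M_0^R)=\{\mathbf{a}=0\}$ in these coordinates, so transversality of the three-dimensional $W^{cu}_\eps(0)$ says precisely that the projection of $TW^{cu}_\eps(0)$ onto the $\mathbf{a}$-plane is surjective near the entry face; hence $W^{cu}_\eps(0)$ is parametrized there by a full two-disc of $\mathbf{a}$-values and one more coordinate, so the exponentially small entry data above is genuinely attained on $W^{cu}_\eps(0)$. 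Part (1) then follows: $W^u_0(J)$ is $\{\mathbf{b}=0,\ \mathbf{y}\in J\}$, and given $r_0\in W^u_0(J)\cap\p B$, a continuity/implicit-function argument off the $\eps=0$ singular orbit, using the expansion estimates above, produces $q_\eps\in W^{cu}_\eps(0)\cap\p B$ and $T_\eps=O(\eps^{-1})$ with $q_\eps\cdot T_\eps\in\p B$ and $O(\eps)$-close to $r_0$.

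For the $C^1$ statement (2) I would track the tangent $3$-plane $P(t)=T_{\gamma(t)}W^{cu}_\eps(0)$ along the passage, representing $P(t)$ by a decomposable $3$-vector in $\Lambda^3(\bbR^5)$ and evolving it under the derivation induced by the variational equation of (\ref{Fenichel coords}), exactly as the two-form $du\wedge dw$ was propagated in Lemma \ref{F1 lemma}; the needed formalism is the one in the appendix on induced flows on Grassmannians. In the splitting $\bbR^5=\mathrm{sp}\{\p_{\mathbf{a}}\}\oplus\mathrm{sp}\{\p_{\mathbf{b}}\}\oplus\mathrm{sp}\{\p_{\mathbf{y}}\}$ this induced flow is block-triangular: the $\p_{\mathbf{a}}$-block expands at rate $\ge\kappa$, the $\p_{\mathbf{b}}$-block contracts at rate $\le\Gamma<0$, and the $\p_{\mathbf{y}}$-block is $O(\eps)$. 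Decompose the Plücker vector of $P(t)$ into the ``good'' component along $\p_{a_1}\wedge\p_{a_2}\wedge\p_{y_1}$ — the $3$-vector spanning $TW^c(J)$, with $\p_{y_1}$ the slow-drift direction $U=(1,0)$ — and the remaining components, each of which carries a $\p_{\mathbf{b}}$ or $\p_{y_2}$ factor. Every such component evolves with an extra relative exponential factor of magnitude at most $e^{(\Gamma+O(\eps))t}$ (respectively $e^{(-\kappa+O(\eps))t}$) compared with the good one, which over $t=T=O(\eps^{-1})$ is $e^{-O(1/\eps)}$; transversality together with the nondegeneracy $U\neq0$ of the slow drift keeps the good Plücker coordinate bounded away from $0$, so after normalization $P(T)$ is exponentially close to $TW^c(J)$. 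Since $W^c(J)$ and the actual exiting configuration differ only through the $O(\eps)$ drift of the slow flow, $P(T)$ — and hence the image of any small neighborhood $V\subset W^{cu}_\eps(0)$ of $q$ under the time-$T$ map — is $O(\eps)$-close in $C^1$ to $W^c(J)$ near $\bar q$. Because the Fenichel change of coordinates is a diffeomorphism with bounded derivatives, all estimates transfer back to the original variables of (\ref{FHN traveling wave ODE}).

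The step I expect to be the main obstacle is sharpening the $C^1$ conclusion from a crude $o(1)$ to the honest $O(\eps)$ rate, and in the same breath identifying the correct ``good'' plane $TW^c(J)$: the exponential-dichotomy bounds dispatch the $\p_{\mathbf{b}}$- and $\p_{y_2}$-contaminated directions with room to spare, but the genuine $O(\eps)$ error, and the mechanism by which the slow drift $\eps U$ selects the $\p_{y_1}$ tangent direction over the center direction $\p_{y_2}$, come from the nonlinear terms $G$ in the $\mathbf{y}$-equation and their feedback into the variational equation along a trajectory of length $O(\eps^{-1})$. Controlling this requires first straightening $W^u(M^R_\eps)$ so that $\{\mathbf{b}=0\}$ is exactly invariant with tangent bundle exactly $\mathrm{sp}\{\p_{\mathbf{a}},\p_{\mathbf{y}}\}$, and then a variation-of-constants/Gronwall estimate on the $\mathbf{b}$-block that is uniform over the long passage time. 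This is precisely the delicate core of the argument in \cite{JK94}, which I would follow and specialize to the $(2,1,2)$ fast-unstable/fast-stable/slow splitting at hand; the inputs — existence, normal hyperbolicity and smoothness of $M^R_\eps$ and $W^{u/s}(M^R_\eps)$ — are supplied by Fenichel's theorem as recalled in \S2.1, and the transversality input is Lemma \ref{F1 lemma}.
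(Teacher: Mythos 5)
The paper does not prove this theorem: it is imported verbatim from \cite{JK94} (``For the general statement and proof, the reader is directed to \cite{JK94}, or \cite{Kuehn15} for a sketch''), so there is no in-paper proof to compare against. What you have written is a faithful outline of the Jones--Kopell argument specialized to the $(2,1,2)$ splitting here: Fenichel coordinates, the elementary entry/exit estimates forcing $|\mathbf{a}(0)|=e^{-O(1/\eps)}$ for an $O(\eps^{-1})$ passage, transversality read off as surjectivity of the $\mathbf{a}$-projection of $TW^{cu}_\eps(0)$ on the entry face, and the tracking of the tangent $3$-plane through induced dynamics on $\bigwedge^3(\bbR^5)$ (dually, the differential forms of \cite{JK94}). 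That is the right architecture, and your identification of which hypotheses feed which step (Lemma \ref{F1 lemma} for transversality, Fenichel theory for the coordinates and the invariance of $\{\mathbf{a}=0\}$, $\{\mathbf{b}=0\}$) matches how the paper deploys the result afterwards. One cosmetic point: the $W^c(J)$ in part (2) of the statement should be read as $W^u(J)$, consistent with the discussion immediately following the theorem and with your choice of $\p_{a_1}\wedge\p_{a_2}\wedge\p_{y_1}$ as the ``good'' $3$-vector.

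The one place where your argument, as written, does not close is exactly the step you flag: the comparison between the two neutral Pl\"ucker components $\p_{a_1}\wedge\p_{a_2}\wedge\p_{y_1}$ and $\p_{a_1}\wedge\p_{a_2}\wedge\p_{y_2}$. Exponential dichotomy kills every component carrying a $\p_{\mathbf{b}}$ factor, but it gives \emph{no} separation between the two slow directions, since both have $O(\eps)$ linear dynamics; the selection of the flow direction $U$ over the $c$-direction is the entire content of the ``exchange'' and is what produces the configuration $T_{\bar q}W^{cu}_\eps(0)\approx T_qW^u(q)\oplus\mathrm{sp}\{\p_vH(q)\}$ used later. Your proposal names this as the delicate core and defers it to \cite{JK94} rather than supplying the estimate (a uniform-in-$T$ variation-of-constants bound on the ratio of the two neutral forms, driven by the $\eps T=O(1)$ drift and the feedback of $G$ into the variational equation). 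So as a self-contained proof there is a gap precisely at the theorem's heart; as a reconstruction of the cited argument it is accurate, and deferring to \cite{JK94} is exactly what the paper itself does.
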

The first part of the theorem says that we can find points in $W_\eps^{cu}(0)$ near $p$ that pass by the slow manifold and then exit the neighborhood $B$ as close to the Nagumo back as we would like. The second part says that for such points, upon exiting the neighborhood $B$, the shooting manifold $W^{cu}_\eps(0)$ will be very close to the manifold $W^{u}(J)$. The fact that $W^{cu}(0)$ is crushed against an unstable manifold is to be expected. The strength of the Exchange Lemma lies in telling us which slow direction is picked out. (Recall that $W^u(M_\eps^R)$ is four-dimensional with the $c$ equation appended, so there is only room for one of the two slow directions.) The result is that the dominant slow direction is that of the trajectory connecting the landing point and jump-off point. On the level of tangent planes, we have \begin{equation}
T_{\bar{q}}W_\eps^{cu}(0)\approx T_q W^{u}(q)\oplus \mathrm{sp}\{\p_v H(q)\}.
\end{equation}

To complete the construction of the pulse, the final ingredient we need is that $W_0^{u}(J)$ intersects $W^{s}(M_0^L)$ transversely along the back. Indeed, the latter is the $\eps\rightarrow 0$ limit of $W_\eps^{cs}(0)$. If $W^s(M_0^L)\pitchfork W_0^u(J)$, then also $W^s(M_\eps^L)\pitchfork W_\eps^{cu}(0)$, since $W_\eps^{cu}(0)$ is $O(\eps)$ close to $W_0^u(J)$ by the Exchange Lemma. This is precisely what we need to show--that there is a (one-dimensional) transverse intersection between $W_\eps^{cu}(0)$ and $W_\eps^{cs}(0)$ for $\eps>0$ small. Since $c'=0$, the trajectory lying in the intersection therefore represents a homoclinic orbit to $0$ for (\ref{FHN traveling wave ODE}) with fixed $\eps$. The required transversality along the back is recorded in the following lemma.

\begin{lemma}\label{F2 lemma}
The invariant manifolds $W^{u}(J)$ and $W^s(M_0^R)$ intersect transversely along the second fast jump $q_b(z)$.
\end{lemma}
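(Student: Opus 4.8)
The plan is to prove Lemma~\ref{F2 lemma} by repeating the Melnikov-type argument of Lemma~\ref{F1 lemma} almost verbatim, exploiting the symmetry of the cubic $f$ that relates the back to the front. Recall that in $uw$-space the back is a rigid reflection of the front, $u_b(z)=u^*-u_f(z)$ and $w_b(z)=-w_f(z)$; in particular the first layer equation gives $u_b'(z)=w_b(z)$. As in the discussion preceding Lemma~\ref{F1 lemma}, I would first reduce the transversality of $W^u(J)$ and $W^s(M_0^R)$ along $q_b$ to the nonvanishing of a single Wronskian. Choosing a vector $Y_0=(\ast,\ast,\ast,\ast,1)\in T_{q_b(z_0)}W^u(J)$ that encodes the $c$-variation and flowing it under the variational equation (\ref{linearization around wave}) along $q_b$ to produce $\p_c\Gamma(z,c)|_{c=c^*}=(u_{b,c}(z),0,w_{b,c}(z),y_{b,c}(z),1)$, the same cofactor expansion as in (\ref{fast jump trans}) collapses the $5\times 5$ determinant to
\[
u_b'(z)\,w_{b,c}(z)-w_b'(z)\,u_{b,c}(z)\neq 0
\]
in the limit as $q_b(z)$ reaches the slow manifold.

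Next I would identify this quantity with the two-form $du\wedge dw$ applied to $q_b'(z)$ and $\p_c\Gamma(z,c)|_{c=c^*}$, and compute its evolution under the derivation induced by (\ref{linearization around wave}). Since $q_b$ solves the same system $F$ as $q_f$, the computation in (\ref{two form deriv calc}) is unchanged and yields
\[
(du\wedge dw)'=-c^*\,du\wedge dw-w_b\,du\wedge dc.
\]
Writing $\alpha(z):=du\wedge dw(q_b'(z),\p_c\Gamma(z,c)|_{c=c^*})$ and using $u_b'=w_b$ to evaluate the $du\wedge dc$ term, the integrating factor $e^{c^*z}$ gives $\p_z(e^{c^*z}\alpha)=-w_b^2\,e^{c^*z}$, exactly as for the front.

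The crucial step is to integrate this and confirm that the limit is nonzero. As $z\to-\infty$ the orbit $q_b$ approaches the jump-off point $q\in M_0^R$, and $w_b\to 0$ at the rate of the leading unstable eigenvalue $\mu_+$ of the linearization along $M_0^R$. Here I would check that $c^*+2\mu_+>0$; this holds because $f'<0$ on the outer branch forces $\mu_+>|c^*|$, so $w_b$ decays faster than $e^{-c^*z}$ and the relevant integral converges. Integrating from $-\infty$ then gives
\[
\lim_{z\to\infty}e^{c^*z}\alpha(z)=-\int_{-\infty}^{\infty}e^{c^*s}\,w_b(s)^2\,ds.
\]
Because $w_b^2=w_f^2$ by the reflection symmetry, this is the very same Melnikov integral that appears in Lemma~\ref{F1 lemma}, hence equals $L<0$. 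In particular it is nonzero, and since the factor $e^{c^*z}$ keeps the two vectors bounded and linearly independent in the limit, the desired transversality follows.

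The main obstacle is bookkeeping rather than analysis: I must verify that, after appending $c'=0$, the $5\times 5$ determinant for the back has the same structure as (\ref{fast jump trans}) — that the rows and columns contributed by the invariant $y$-direction and by the slow tangent vectors $\p_vH,\p_cH$ again reduce everything to the $2\times 2$ Wronskian $u_b'w_{b,c}-w_b'u_{b,c}$ — and that the Melnikov integral converges at both ends of $q_b$, where the exponential rates at the two slow branches differ. Once the reflection symmetry $w_b=-w_f$ is invoked, however, the sign of the limit is inherited from Lemma~\ref{F1 lemma} with no further computation, so the only genuinely new work is the determinant reduction and the convergence check at the jump-off end.
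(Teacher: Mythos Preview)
Your argument has a genuine gap at the very first step: you take the ``unknown'' tangent direction to $W^u(J)$ to be a $c$-variation, writing $Y_0=(\ast,\ast,\ast,\ast,1)\in T_{q_b(z_0)}W^u(J)$. But no such vector exists. The segment $J$ is a piece of a slow trajectory for (\ref{2d slow flow}), which moves only in $v$ and keeps $c$ fixed at $c^*$; hence $W^u(J)\subset\{c=c^*\}$ and every tangent vector to $W^u(J)$ has zero $c$-component. This is precisely the content of the Exchange Lemma discussion after Theorem~\ref{Exchange lma}: the slow direction that the shooting manifold carries along the back is $\partial_vH$, not $\partial_cH$. The paper accordingly uses the $v$-variation $\partial_v q_b(z)$ in place of your $\partial_c\Gamma$.

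This change propagates into the Melnikov computation. Since neither $q_b'$ nor $\partial_v q_b$ has a $c$-component, the term $-w_b\,du\wedge dc$ in your two-form equation vanishes when evaluated on this pair; instead the $du\wedge dv$ contribution (which you dropped) survives and equals $u_b'=w_b$. One obtains $\partial_z(e^{c^*z}\alpha)=e^{c^*z}w_b$, and the limiting Melnikov integral is
\[
\lim_{z\to\infty}e^{c^*z}\alpha(z)=\int_{-\infty}^{\infty}e^{c^*s}w_b(s)\,ds,
\]
linear in $w_b$ rather than quadratic. Its sign then comes from $w_b=-w_f<0$, not from $w_b^2=w_f^2$. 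Your integral $-\int e^{c^*s}w_b^2\,ds$ is indeed nonzero, but it measures the splitting of the \emph{wrong} pair of manifolds (effectively a $c$-variation of the back, which $W^u(J)$ does not contain), so it does not establish the transversality asserted in the lemma. The determinant reduction and convergence checks you outline are otherwise on the right track; only the choice of variation direction needs to be corrected.
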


The proof is identical to that of Lemma \ref{F1 lemma}, so we omit the details. The reader is invited to check that it suffices to show that \begin{equation}\label{transversality 2}
\lim\limits_{z\rightarrow\infty}e^{c^*z}du\wedge dw(q_b'(z),\partial_v q_b(z))<0,
\end{equation} where $\p_v q_b(z)$--akin to $\p_c\Gamma(z,c)|_{c=c^*}$ from the front--gives the change in the orbit $q_b(z)$ as $v$ varies and is the unknown tangent direction to $W^u(M_0^R)$. The inequality (\ref{transversality 2}) is confirmed using the Melnikov integral \begin{equation}
\lim\limits_{z\rightarrow\infty}e^{c^*z}du\wedge dw({q}_b'(z),\partial_v {q}_b(z))=\int_{-\infty}^{\infty}e^{c^*z}{w}_b(z)\,dz.
\end{equation} As a remark, both transversality conditions are identical to those needed to construct the pulse for the 3D system (see \cite{JKL91} and \cite{KSS97}). The reason for this is that the extra (invariant) $y$ direction is unstable, so it will not be duplicated in the tangent space to $W^s(M_0^{R/L})$ at the respective landing points. This is readily seen from the matrix in (\ref{fast jump trans}).

Putting together Lemmas \ref{F1 lemma} and \ref{F2 lemma} with Theorem \ref{Exchange lma}, we can conclude the main result of this section.

\begin{theorem}\label{thm pulse existence}
For $\eps>0$ sufficiently small, equation (\ref{FHN traveling wave ODE}) possesses an orbit $\varphi_\eps$ homoclinic to $0$ for a wave speed $c(\eps)=c^*+O(\eps)$. Furthermore, $\varphi_\eps$ is $O(\eps)$ close to the singular orbit consisting of two alternating fast and slow segments. Finally, the orbit is locally unique.
\end{theorem}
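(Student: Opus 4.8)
\section*{Proof proposal for Theorem \ref{thm pulse existence}}

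The plan is to assemble the three ingredients developed in this section: the Exchange Lemma (Theorem \ref{Exchange lma}) as a black box, the transversality of the shooting manifold $W^{cu}_0(0)$ with $W^s(M_0^R)$ along the front (Lemma \ref{F1 lemma}), and the transversality of $W_0^u(J)$ with $W^s(M_0^L)$ along the back (Lemma \ref{F2 lemma}). Combined with persistence of transverse intersections under $C^1$-small perturbations, and with the identification of the $\eps\to 0$ limits of $W^{cu}_\eps(0)$ and $W^{cs}_\eps(0)$ with the appropriate Fenichel un/stable foliations, these will force a one-dimensional transverse intersection $W^{cu}_\eps(0)\pitchfork W^{cs}_\eps(0)$ in $\bbR^5$; since the appended equation $c'=0$ makes $c$ an invariant of the flow, this intersection will be a single orbit homoclinic to $0$ together with its time translates.

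First I would check that the hypothesis of Theorem \ref{Exchange lma} holds: $W^{cu}_0(0)\pitchfork W^s(M_0^R)$ is exactly Lemma \ref{F1 lemma}, the two manifolds meeting along the fast front $q_f(z)$. Taking $J$ to be a compact segment of the limiting slow trajectory (\ref{2d slow flow}) through the landing point $p$ with the jump-off point $q$ in its interior, part (2) of the Exchange Lemma then gives, for $\eps>0$ small, that the portion of $W^{cu}_\eps(0)$ exiting the Fenichel box $B$ is $O(\eps)$-close in $C^1$ to $W^u(J)$ in a neighborhood of the Nagumo back, with $T_{\bar q}W^{cu}_\eps(0)\approx T_qW^u(q)\oplus\mathrm{sp}\{\partial_v H(q)\}$. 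Next, Lemma \ref{F2 lemma} gives $W_0^u(J)\pitchfork W^s(M_0^L)$ along the back $q_b(z)$; since $W^{cu}_\eps(0)$ is $C^1$-$O(\eps)$-close to $W_0^u(J)$ there and transverse intersections persist, $W^{cu}_\eps(0)$ crosses $W^s(M^L_\eps)$ transversely near the back's landing point $\hat q$. Finally, because the origin is an attracting fixed point of the slow flow (\ref{slow flow}) within $M_0^L$ (near $v=0$ one has $\dot v\approx (1/c^*)(\gamma+1/a)v$, which is negative since $c^*<0$), the strong stable foliation of $M^L_\eps$ near $\hat q$, continued forward by the slow flow down to $0$, is precisely the $\eps\to 0$ limit of $W^{cs}_\eps(0)$. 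Hence the transverse intersection obtained near the back is in fact a transverse intersection $W^{cu}_\eps(0)\pitchfork W^{cs}_\eps(0)$; as $\dim W^{cu}_\eps(0)=\dim W^{cs}_\eps(0)=3$ in $\bbR^5$, it is one-dimensional.

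It remains to read off the stated conclusions. Since $c'=0$, the wave speed is constant along any trajectory, so the one-dimensional invariant intersection consists of a single orbit $\varphi_\eps$ homoclinic to $0$ together with its time translates, all at one speed $c(\eps)$; transversality of the intersection gives that this is the only such orbit in a neighborhood of the singular orbit, i.e. local uniqueness. The estimate $c(\eps)=c^*+O(\eps)$ and the $C^1$-$O(\eps)$ proximity of $\varphi_\eps$, piece by piece, to the concatenation $q_f\cup J\cup q_b\cup(\text{slow descent on } M_0^L)$ follow from the $O(\eps)$ bounds in part (1) of Theorem \ref{Exchange lma} together with Fenichel's estimates on the slow segments, or alternatively from an Implicit Function Theorem argument applied to the intersection equation with $c$ as the unfolding parameter.

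The main obstacle is not a single estimate but the careful bookkeeping of the fast-to-slow transition in the $C^1$ topology: one must ensure that the $C^1$-closeness provided by the Exchange Lemma is stated at the right cross-section and in coordinates compatible with the Melnikov computation behind Lemma \ref{F2 lemma}, so that transversality along the back survives the passage and the single leftover slow direction $\partial_v H(q)$ that is crushed onto $W^u(q)$ is genuinely transverse to $W^s(M_0^L)$ rather than accidentally tangent to it. A related subtlety is confirming that the orbit in the transverse intersection actually returns to the origin instead of merely shadowing $M^L_\eps$ for an $O(\eps^{-1})$ time; this is exactly where attractivity of $0$ within $M_0^L$ under the slow flow, and hence the identification of the limit of $W^{cs}_\eps(0)$ near $\hat q$, does the work.
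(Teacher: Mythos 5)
Your proposal is correct and follows essentially the same route as the paper: verify the hypothesis of the Exchange Lemma via Lemma \ref{F1 lemma}, use part (2) of Theorem \ref{Exchange lma} to get $C^1$ $O(\eps)$-closeness of $W^{cu}_\eps(0)$ to $W^u(J)$ at exit, invoke Lemma \ref{F2 lemma} and persistence of transversality to obtain the one-dimensional transverse intersection $W^{cu}_\eps(0)\pitchfork W^{cs}_\eps(0)$ in $\bbR^5$, and read off local uniqueness, the speed estimate, and closeness to the singular orbit from the Implicit Function Theorem. Your added remarks on identifying the $\eps\to 0$ limit of $W^{cs}_\eps(0)$ with $W^s(M_0^L)$ via attractivity of the origin under the slow flow are consistent with the Fenichel-theoretic identification the paper uses implicitly.
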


\begin{proof}
We have already explained how the results of this section generate a transverse intersection of $W_\eps^{cs}(0)$ and $W_\eps^{cu}(0)$. The closeness to the singular orbit and the local uniqueness both follow from the Implicit Function Theorem, which is used to continue the transverse intersection to the $\eps\neq 0$ case.
\end{proof}

\section{Stability of the Wave}

Having established the existence of $\varphi$, we now turn to the issue of its stability. The stability problem for $\vp$ is discussed at length in \cite{corn}, so we refer the reader there for proofs and more details regarding the results in this section.

As discussed in the introduction, the stability of the wave is ultimately determined by the spectrum $\sigma(L)$ of the operator $L$ in (\ref{linearized operator}). The set $\sigma(L)$ can be decomposed into two parts. First, $\lambda\in\bbC$ is an eigenvalue of $L$ if there exists a solution $P=(p, q)^T\in BU(\bbR,\bbC^2)$ to the equation \begin{equation}\label{eval equation formal}
LP=\lambda P.
\end{equation} The set of eigenvalues of $L$ of finite multiplicity is denoted $\sigma_n(L)$. Setting $p_z=r$ and $q_z=\eps s$, we can write the eigenvalue problem as a first order system \begin{equation}\label{eval eqn 1os}
\left(\begin{array}{c}
p\\q\\r\\s
\end{array} \right)_z=\left(\begin{array}{c c c c}
0 & 0 & 1 & 0\\
0 & 0 & 0 & \eps\\
\lambda-f'(\hat{u}) & 1 & -c & 0\\
-1 & \frac{\lambda}{\eps}+\gamma & 0 & -c
\end{array}\right)\left(\begin{array}{c}
p\\q\\r\\s
\end{array}\right).
\end{equation} This nonautonomous, linear system is abbreviated \begin{equation}\label{eval eqn abbr}
Y'(z)=A(\lambda,z)Y(z).
\end{equation} Notice that the only $z$-dependence of $A(\lambda,z)$ is from $\hat{u}$, which we know decays exponentially to $0$ as $z\rightarrow\pm\infty$. We therefore have a well-defined limit \begin{equation}\label{A_infinity(lambda) defn}
A_\infty(\lambda)=\lim\limits_{z\rightarrow\pm\infty}A(\lambda,z)=\left(\begin{array}{c c c c}
0 & 0 & 1 & 0\\
0 & 0 & 0 & \eps\\
\lambda+a & 1 & -c & 0\\
-1 & \frac{\lambda}{\eps}+\gamma & 0 & -c
\end{array}\right).
\end{equation} 

The set of eigenvalues is only part of the spectrum of $L$. The rest is the so-called essential spectrum $\sigma_\mathrm{ess}(L)$, which in this case is given by (cf. Lemma 3.1.10 of \cite{KP13}) \begin{equation}\label{ess spec defn}
\sigma_\mathrm{ess}(L)=\{\lambda\in\bbC:A_\infty(\lambda) \text{ has an eigenvalue }\mu\in i\bbR \}.
\end{equation}
It is shown in Lemma 1 of \cite{CJ17} that $\sigma_\mathrm{ess}(L)$ is contained in a half-plane of the form \begin{equation}\label{ess spec half plane}
\mathcal{K}=\{\lambda\in\bbC:\mathrm{Re}\,\lambda<K \},
\end{equation} for some $K<0$. $K$ cannot be chosen independently of $\eps$, but this is not a problem since $\eps$ is fixed in the stability analysis. We therefore avoid many of the difficulties encountered in \cite{Jones84}. Although there is in general a disconnect between spectral, linear, and nonlinear stability of solitons, for systems of the form (\ref{general PDE}), spectral stability is sufficient, cf. \cite{BJ89,Henry}.

\begin{theorem}\label{nonlinear stab theorem}
Suppose that the operator $L$ satisfies \begin{enumerate}
\item There exists $\beta<0$ such that $\sigma(L)\setminus\{0\}\subset\{\lambda\in\bbC:\mathrm{Re }\lambda<\beta\}$.
\item $0$ is a simple eigenvalue.
\end{enumerate} Then $\varphi$ is stable in the sense of Definition \ref{stability defn}.
\end{theorem}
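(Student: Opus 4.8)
The plan is to derive nonlinear (orbital) stability with asymptotic phase from the two spectral hypotheses by the standard semigroup argument for semilinear parabolic equations, following \cite{BJ89} and \cite{Henry}; our only genuinely new input is the verification of hypotheses (1)--(2), which is the content of \S 4. First I would recast (\ref{traveling wave PDE}) as an abstract evolution equation $U_t = LU + \mathcal{N}(U)$ on a Banach space $X$ --- conveniently a fractional power space $X^\alpha$ associated with the sectorial operator $\partial_z^2 + c\partial_z$ on $BU(\bbR,\bbR^2)$, with $\alpha\in(1/2,1)$ so that $X^\alpha$ embeds in $BU(\bbR,\bbR^2)$ with control of first derivatives --- where $\mathcal{N}$ gathers the quadratic and cubic remainder of $f$ about $\hat u$ and satisfies $\mathcal{N}(0)=0$, $D\mathcal{N}(0)=0$ and is locally Lipschitz (indeed smooth). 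Since $\partial_z^2 + c\partial_z$ is sectorial and $L$ is a bounded perturbation of it, $L$ generates an analytic semigroup $e^{Lt}$ on $X$. Combining the essential-spectrum bound (\ref{ess spec half plane}) from \cite{CJ17} with hypothesis (1) gives $\sigma(L)\cap\{\mathrm{Re}\,\lambda\ge\beta\}=\{0\}$ with $0$ an isolated eigenvalue, and hypothesis (2) says its algebraic multiplicity is one, with eigenfunction $\hat\varphi':=\varphi'$ (one sees $L\varphi'=0$ by differentiating the profile equation in $z$).

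Next, introduce the rank-one Riesz spectral projection $P$ of $L$ onto $\mathrm{span}\{\varphi'\}$ and set $Q = I - P$; both commute with $e^{Lt}$, and $\sigma\bigl(L|_{\mathrm{range}\,Q}\bigr)\subset\{\mathrm{Re}\,\lambda<\beta\}$. Analytic semigroup theory then yields constants $M\ge 1$ and $\omega$ with $0<\omega<-\beta$ such that $\|e^{Lt}Q\|_{X\to X}\le Me^{-\omega t}$ and $\|e^{Lt}Q\|_{X\to X^\alpha}\le Mt^{-\alpha}e^{-\omega t}$ for $t>0$: this is the spectral gap on the complement of the translation mode. Now set up modulation coordinates. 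For $u$ near the orbit $\{\varphi(\cdot+k):k\in\bbR\}$, write $u(z,t)=\varphi(z+k(t))+w(z,t)$ and pin down the phase $k(t)$ by demanding $w(\cdot,t)\in\mathrm{range}\,Q$ for all $t$; the implicit function theorem makes this choice unique and smooth for $\|w\|_{X^\alpha}$ small because $\varphi'$ is transverse to $\mathrm{range}\,Q$. Substituting into the PDE produces a coupled system: a scalar ODE $\dot k = \mathcal{P}(w,\dot k)$ with $\mathcal{P}$ at least quadratic, and a parabolic equation $w_t = Lw + \mathcal{R}(w,\dot k)$ with $Q$-valued remainder $\mathcal{R}$ at least quadratic in $(w,\dot k)$.

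Writing the variation-of-constants formula $w(t)=e^{Lt}Qw(0)+\int_0^t e^{L(t-s)}Q\,\mathcal{R}(w(s),\dot k(s))\,ds$ and feeding in the decay estimates, a standard contraction-plus-continuation argument (Gronwall against the $t^{-\alpha}e^{-\omega t}$ kernel, together with the quadratic bound on $\mathcal{R}$ and $\mathcal{P}$) shows that if $\|w(0)\|_{X^\alpha}$ is small enough then $w(t)$ exists for all $t\ge 0$, $\|w(t)\|_{X^\alpha}\le Ce^{-\omega t/2}\|w(0)\|_{X^\alpha}$, and $\dot k\in L^1(0,\infty)$ so that $k(t)\to k_\infty$ for some finite $k_\infty$. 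Finally $\|u(\cdot,t)-\varphi(\cdot+k_\infty)\|_\infty \le \|w(t)\|_\infty + \|\varphi(\cdot+k(t))-\varphi(\cdot+k_\infty)\|_\infty \to 0$ (the second term by $k(t)\to k_\infty$ and uniform continuity of $\varphi$), which is exactly Definition \ref{stability defn}, with $V$ the preimage under the modulation map of a small ball in $X^\alpha$.

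The main obstacle --- and the reason the theorem needs proof despite being "standard" --- is the forced presence of $0\in\sigma(L)$ coming from translation invariance: there is no uniform exponential decay of the full linearized semigroup, so the zero mode must be split off by $P$ and tracked as an evolving phase $k(t)$, and one must show $k(t)$ \emph{converges} rather than drifting. This rests on the transversality/modulation setup and on the integrability of $\dot k$, which in turn uses that $\mathcal{R},\mathcal{P}$ are quadratic and that the spectrum on $\mathrm{range}\,Q$ is strictly to the left of the imaginary axis. The secondary technical point is choosing $X^\alpha$ so that $\mathcal{N}$ is smooth and the smoothing estimates close the nonlinear iteration in a norm controlling the $\|\cdot\|_\infty$ convergence demanded by Definition \ref{stability defn}. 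All of this is carried out in \cite{BJ89} and \cite{Henry}; we invoke it, the new work being the computation of $\Maslov$ that establishes (1)--(2).
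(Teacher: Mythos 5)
Your proposal is correct and is precisely the standard Henry/Bates--Jones argument that the paper itself invokes without proof (the paper simply cites \cite{BJ89,Henry} for the fact that spectral stability implies nonlinear stability for systems of this form). Your reconstruction of the spectral projection, modulation of the translation phase, and variation-of-constants contraction is an accurate account of what those references contain, so there is nothing to compare beyond the level of detail.
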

Note that the translation invariance mentioned above forces $0$ to be an eigenvalue of $L$. Via the Evans function, its algebraic multiplicity is shown in \cite{AJ94} (pp. 57-60) to be one if the wave is transversely constructed. This is exactly what we proved in \S 3. We remark that this also follows from Lemma 3 of \cite{CJ17}, together with Theorem 5.2 of \cite{corn}. To prove stability, it therefore suffices to show that (1) holds in Theorem \ref{nonlinear stab theorem}. A big step in this direction is the following, which is proved in \S 5.1 of \cite{corn}.

\begin{lemma}[Lemma 5.2 of \cite{corn}]\label{realness of spectrum}
For $\eps>0$ sufficiently small, if $\lambda\in\sigma_n(L)\cap(\bbC\setminus\mathcal{K})$ and $\mathrm{Re}\,\lambda\geq-\frac{c^2}{8}$, then $\lambda\in\bbR$.
\end{lemma}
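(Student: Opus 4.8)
The plan is to show that any eigenvalue $\lambda$ in the region $\mathrm{Re}\,\lambda \geq -c^2/8$, lying outside the essential-spectrum half-plane $\mathcal{K}$, must be real. I would follow the standard device for skew-gradient systems: reduce the eigenvalue equation $LP = \lambda P$ to a nonlocal scalar equation for the first component, and then exploit a positivity/definiteness structure that forces $\mathrm{Im}\,\lambda = 0$. Concretely, write the system $LP = \lambda P$ out as the pair
\begin{equation}\label{prop:eval-pair}
\begin{aligned}
p_{zz} + c p_z + f'(\hat u) p - q &= \lambda p,\\
q_{zz} + c q_z + \eps p - \eps\gamma q &= \lambda q.
\end{aligned}
\end{equation}
Since $\lambda \notin \mathcal{K}$, the operator $\p_z^2 + c\p_z - \eps\gamma - \lambda$ appearing in the second line is invertible on $BU(\bbR,\bbC)$ with a bounded Green's function (this is precisely the content of the essential-spectrum bound from \cite{CJ17} quoted above, plus the fact that $\lambda$ is to the right of $\mathcal{K}$), so I can solve $q = \eps\, (\lambda + \eps\gamma - \p_z^2 - c\p_z)^{-1} p =: \eps\, R_\lambda p$ and substitute into the first equation to obtain
\begin{equation}\label{prop:nonlocal}
p_{zz} + c p_z + f'(\hat u) p - \eps R_\lambda p = \lambda p.
\end{equation}

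Next I would remove the first-order term by the usual conjugation $p = e^{-cz/2}\tilde p$, turning the local part of \eqref{prop:nonlocal} into a Schr\"odinger-type operator $\p_z^2 + (f'(\hat u) - c^2/4)$ that is formally self-adjoint in the weighted space $L^2(\bbR, e^{cz}\,dz)$, together with the nonlocal term $-\eps R_\lambda$. Testing \eqref{prop:nonlocal} against $\bar p\, e^{cz}$ and integrating by parts gives
\begin{equation}\label{prop:quad-form}
-\int e^{cz}|p_z|^2 + \int e^{cz}\Big(f'(\hat u) - \tfrac{c^2}{4}\Big)|p|^2 \;-\; \eps \int e^{cz}\, \overline{p}\,(R_\lambda p) \;=\; \Big(\lambda - \tfrac{c^2}{4}\Big)\int e^{cz}|p|^2,
\end{equation}
after a harmless shift; the key point is that if $\mathrm{Re}\,\lambda \geq -c^2/8$ then $\mathrm{Re}(\lambda - c^2/4)$ is controlled, and more importantly the \emph{imaginary} part of the left-hand side comes entirely from the nonlocal term $\eps\int e^{cz}\bar p (R_\lambda p)$. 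So taking imaginary parts of \eqref{prop:quad-form} isolates
\begin{equation}\label{prop:imag-part}
\mathrm{Im}\,\lambda \cdot \int e^{cz}|p|^2 \;=\; -\eps\,\mathrm{Im}\!\int e^{cz}\,\overline{p}\,(R_\lambda p).
\end{equation}
The plan is then to show that for $\eps$ small and $\mathrm{Re}\,\lambda$ in the stated range, the right-hand side of \eqref{prop:imag-part} has the \emph{opposite} sign to (a fixed multiple of) $\mathrm{Im}\,\lambda$ unless $\mathrm{Im}\,\lambda = 0$: writing $\langle \cdot,\cdot\rangle$ for the weighted inner product, $R_\lambda$ is the resolvent of a self-adjoint operator $S := \p_z^2 + c\p_z - \eps\gamma$ (self-adjoint in the weighted space), so $\mathrm{Im}\langle p, R_\lambda p\rangle = \mathrm{Im}\langle p, (\lambda - S)^{-1}p\rangle = \mathrm{Im}(\lambda)\,\|(\lambda - S)^{-1/2}\cdots\|$ — more precisely, using the spectral theorem for $S$, $\mathrm{Im}\langle p, (\lambda - S)^{-1} p\rangle = -\mathrm{Im}\,\lambda \int \frac{d\mu_p(t)}{|\lambda - t|^2}$ where $\mu_p$ is the spectral measure of $p$ for $S$. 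Plugging this into \eqref{prop:imag-part} yields
\begin{equation}\label{prop:final}
\mathrm{Im}\,\lambda\left(\int e^{cz}|p|^2 \;-\; \eps\int \frac{d\mu_p(t)}{|\lambda - t|^2}\right) = 0,
\end{equation}
and since $\sigma(S) \subset (-\infty, -\eps\gamma]$ lies far from $\lambda$ (because $\mathrm{Re}\,\lambda \geq -c^2/8 > -\eps\gamma$ for $\eps$ small), we have $\int \frac{d\mu_p(t)}{|\lambda-t|^2} \leq C \mu_p(\bbR) = C\int e^{cz}|p|^2$, so the parenthesized factor is positive once $\eps C < 1$. Hence $\mathrm{Im}\,\lambda = 0$.

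The main obstacle I anticipate is making the resolvent manipulations rigorous in the weighted $BU$/exponentially-weighted $L^2$ setting rather than on a compact domain: one must check that $p$ (hence $\tilde p = e^{cz/2}p$) actually lies in the weighted $L^2$ space so that the spectral theorem applies, that the boundary terms in the integration by parts vanish, and that $S$ with the stated weight is genuinely self-adjoint with spectrum bounded above by $-\eps\gamma$ uniformly in the relevant range. Eigenfunctions of $L$ decay like $e^{\mu_2 z}$ at $+\infty$ and $e^{\mu_3 z}$ at $-\infty$ with $\mu_2 < 0 < \mu_3$ from \eqref{eval inequality}, and since $c < 0$ one checks $\mathrm{Re}(\mu_2 + c/2)$ and $\mathrm{Re}(\mu_3 + c/2)$ have the right signs so that $e^{cz/2}p \in L^2$; this decay bookkeeping, uniform in $\eps$, is the delicate part and is exactly where the hypothesis $\mathrm{Re}\,\lambda \geq -c^2/8$ (rather than merely $\lambda \notin \mathcal{K}$) gets used — it guarantees $\lambda$ stays a definite distance to the right of both $\sigma_{\mathrm{ess}}(L)$ and $\sigma(S)$, keeping the relevant resolvents bounded with constants independent of $\eps$. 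I expect the bulk of the work in the full proof (done in \S 5.1 of \cite{corn}) to be precisely this functional-analytic setup, with the sign computation \eqref{prop:final} being the short conceptual heart.
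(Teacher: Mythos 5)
Your strategy is, in substance, the same one the paper imports from \S 5.1 of \cite{corn} (itself adapted from \cite{CH14}): eliminating $q$ via the resolvent and taking imaginary parts of the weighted pairing is algebraically identical to the skew-gradient identity $\mathrm{Im}\,\lambda\left(\int e^{cz}|p|^2-\tfrac{1}{\eps}\int e^{cz}|q|^2\right)=0$ used there (note $R_\lambda p=q/\eps$, so your spectral-measure term is exactly $\tfrac{1}{\eps}\int e^{cz}|q|^2$), and the contradiction for small $\eps$ is the same uniform resolvent estimate $\|q\|\lesssim(\eps/c^2)\|p\|$ in the $e^{cz}$-weighted norm. So the architecture is right and matches the reference proof.

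There is, however, a concrete error at exactly the point where the hypothesis $\mathrm{Re}\,\lambda\geq-c^2/8$ must do its work. You assert $\sigma(S)\subset(-\infty,-\eps\gamma]$ and that $\lambda$ is far from it because $-c^2/8>-\eps\gamma$ for $\eps$ small; that inequality is false ($c$ is $O(1)$ while $\eps\gamma\to0$, so $-c^2/8<-\eps\gamma$), and with that location of $\sigma(S)$ your bound $\int|\lambda-t|^{-2}\,d\mu_p(t)\leq C\,\mu_p(\bbR)$ would fail with a constant independent of $\mathrm{Im}\,\lambda$, since $\lambda$ may sit at real part $-c^2/8$, directly over the half-line. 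The saving fact is the drift term: in $L^2(e^{cz}dz)$, where $S=\p_z^2+c\p_z-\eps\gamma$ is self-adjoint, the substitution $q=e^{-cz/2}\tilde q$ shows $S$ is unitarily equivalent to $\p_z^2-\tfrac{c^2}{4}-\eps\gamma$ on unweighted $L^2$, so $\sigma(S)=(-\infty,-\tfrac{c^2}{4}-\eps\gamma]$ and $\mathrm{dist}(\lambda,\sigma(S))\geq\mathrm{Re}\,\lambda+\tfrac{c^2}{4}+\eps\gamma\geq\tfrac{c^2}{8}$. This is precisely where the threshold $-c^2/8$ (half of $c^2/4$) comes from, and it gives the uniform constant $64/c^4$ that makes your parenthesized factor positive once $\eps<c^4/64$. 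With that correction, together with the decay bookkeeping you already flag (which does work out: by \eqref{eval inequality}, $\mu_2<0<-c/2$ and $\mu_3>-c>-c/2$, so $e^{cz/2}p$ and $e^{cz/2}q$ are square-integrable), the argument closes.
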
 We therefore see that the conditions of Theorem \ref{nonlinear stab theorem} are satisfied as long as $L$ has no real, positive eigenvalues. Indeed, since $0\in\sigma_n(L)$ is a simple eigenvalue, there is an interval $(-\delta,\delta)\subset\bbR$ containing no eigenvalues of $L$. Taking $\beta$ to be the maximum of $-\delta$, $-c^2/8$, and $K$ from (\ref{ess spec half plane}), it follows that the only non-zero eigenvalues with real part greater than $\beta$ must be real and positive. The Maslov index can be used to detect these unstable eigenvalues.
 \subsection{The Maslov Index} Let $\langle\cdot,\cdot\rangle$ denote the standard dot product on $\bbR^4$. We define a \emph{complex structure} on $\bbR^4$ by the matrix \begin{equation}\label{matrix J}
 J=\left(\begin{array}{c c c c}
0 & 0 & 1 & 0\\0 & 0 & 0 & -1\\-1 & 0 & 0 & 0\\0 & 1 & 0 & 0
 \end{array}\right).
 \end{equation} It is a standard fact (cf. \S 1 of \cite{Heck13}) that $J$ and $\langle\cdot,\cdot\rangle$ define a symplectic (i.e. skew-symmetric, nondegenerate, bilinear) form on $\bbR^4$ by the formula \begin{equation}\label{omega defn}
 \omega(a,b)=\langle a,Jb\rangle.
 \end{equation} The key to exploiting this fact for the stability analysis of $\vp$ is that the value of this form can be tracked on any two solutions of (\ref{eval eqn 1os}), for $\lambda\in\bbR$ fixed. It is shown in \cite{corn,CJ17} that: \begin{theorem}\label{thm Lambda n invariant}
 	Let $u,v$ be two solutions of (\ref{eval eqn 1os}) for fixed $\lambda\in\bbR$. Then \begin{equation}
 	\frac{d}{dz}\omega(u,v)=-c\omega(u,v).
 	\end{equation} Consequently, the form $\Omega(\cdot,\cdot):=e^{cz}\omega(\cdot,\cdot)$ is independent of $z$.
 \end{theorem}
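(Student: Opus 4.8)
The plan is to compute $\frac{d}{dz}\omega(u,v)$ directly by differentiating through the bilinear form and substituting the eigenvalue equation (\ref{eval eqn 1os}). Writing $\omega(u,v) = \langle u, Jv\rangle$, the product rule gives
\[
\frac{d}{dz}\omega(u,v) = \langle u', Jv\rangle + \langle u, Jv'\rangle = \langle A(\lambda,z)u, Jv\rangle + \langle u, JA(\lambda,z)v\rangle = \langle u, (A^T J + J A)v\rangle,
\]
where $A = A(\lambda,z)$ is the coefficient matrix in (\ref{eval eqn 1os}) and I have used that $J$ is constant. So the entire claim reduces to the algebraic identity $A^T J + J A = -cJ$, i.e. $A^T J + J A + cJ = 0$. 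The key structural input is that, for $\lambda\in\bbR$, the matrix $A(\lambda,z)$ is "infinitesimally symplectic up to the damping term $-c$": the block $\begin{pmatrix} f'(\hat u)-\lambda & -1 \\ 1 & -\gamma - \lambda/\eps\end{pmatrix}$ appearing (up to sign) in the lower-left of $A$ is symmetric when $\lambda$ is real, which is exactly what makes the skew-gradient structure visible.

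The steps, in order: first, record $J$ from (\ref{matrix J}) and $A(\lambda,z)$ from (\ref{eval eqn 1os}); second, carry out the matrix multiplications $JA$ and $A^T J$ (this is the one genuinely computational step, a $4\times4$ product, but it is routine and I would not write it out in full — one checks the four $2\times2$ blocks); third, observe that the symmetry of the reaction block for real $\lambda$ forces the $\lambda$-dependent and $\hat u$-dependent entries to cancel in $A^T J + J A$, leaving only the contribution of the $-c$ on the diagonal of $A$, which produces exactly $-cJ$. Fourth, conclude $\frac{d}{dz}\omega(u,v) = \langle u, -cJv\rangle = -c\,\omega(u,v)$. Finally, for the last sentence: set $\Omega(u,v) := e^{cz}\omega(u,v)$ and compute $\frac{d}{dz}\Omega(u,v) = ce^{cz}\omega(u,v) + e^{cz}\frac{d}{dz}\omega(u,v) = ce^{cz}\omega(u,v) - ce^{cz}\omega(u,v) = 0$, so $\Omega$ is constant along any pair of solutions.

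There is no real obstacle here; the statement is essentially a Liouville/Abel-type identity for the symplectic form under a linear flow. The only place where care is needed is the hypothesis $\lambda\in\bbR$: the cancellation in step three uses that the off-diagonal entries $\lambda - f'(\hat u)$ and $\lambda/\eps + \gamma$ are real, so that the reaction block is genuinely symmetric rather than merely formally so. For complex $\lambda$ the identity fails, and this is why the Maslov-index machinery is restricted to the real axis — consistent with Lemma \ref{realness of spectrum}, which guarantees that all potentially unstable eigenvalues are real anyway. I would flag this dependence on $\lambda\in\bbR$ explicitly but otherwise present the computation tersely, since it is dual to the exterior-derivative calculation already carried out in (\ref{two form deriv calc}) for the two-form $du\wedge dw$ along the fast jump.
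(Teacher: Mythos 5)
Your proof is correct and coincides with the argument the paper defers to (pages 11--12 of \cite{CJ17}): one verifies the matrix identity $A(\lambda,z)^TJ+JA(\lambda,z)=-cJ$ by direct block computation, and the Abel-type conclusion for $\Omega=e^{cz}\omega$ follows immediately. One small correction to your commentary: the entries $\lambda-f'(\hat{u})$ and $\lambda/\eps+\gamma$ cancel between $JA$ and the \emph{transpose} $A^TJ$ identically in $\lambda$, with no conjugation involved, so the matrix identity itself does not require $\lambda\in\bbR$; realness of $\lambda$ is needed only so that the solution space is real and $\omega$ (rather than a Hermitian analogue $\langle\bar{u},Jv\rangle$, for which the identity genuinely does fail off the real axis) is the form whose vanishing defines the Lagrangian structure.
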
 For a proof, the reader is referred to pages 11-12 of \cite{CJ17}. It follows from Theorem \ref{thm Lambda n invariant} that if $\omega(u(z),v(z))=0$ for any $z\in\bbR$, then $\omega(u,v)\equiv 0$. A plane $V\in\Gr_2(\bbR^4)$ is called \emph{Lagrangian} if \begin{equation}
\omega|_{V}\equiv 0.
\end{equation} The set of Lagrangian planes in $\bbR^4$ is a three-dimensional compact manifold, called the Lagrangian Grassmannian $\Lambda(2)$. Now, since (\ref{eval eqn 1os}) is linear, it induces an equation on $\Gr_k(\bbR^4)$ for each $k$. In particular, the following is a corollary of Theorem \ref{thm Lambda n invariant}. \begin{theorem} The Lagrangian Grassmannian $\Lambda(2)$ is an invariant manifold for the equation induced by (\ref{eval eqn 1os}) on $\Gr_2(\bbR^4)$.
\end{theorem}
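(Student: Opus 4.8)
The plan is to recognize this as an immediate corollary of Theorem~\ref{thm Lambda n invariant}, using the fact that the flow induced by a linear system on $\Gr_2(\bbR^4)$ is nothing but the natural action of the fundamental solution operator on $2$-planes. First I would recall that $\Lambda(2)$ is a smooth (compact) submanifold of $\Gr_2(\bbR^4)$; this is standard, so ``invariant manifold'' reduces to checking that the induced flow preserves the \emph{subset} $\Lambda(2)$.

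Next, fix $\lambda\in\bbR$ and let $\Phi(z,z_0)$ denote the fundamental solution operator of (\ref{eval eqn abbr}), so that the equation induced by (\ref{eval eqn 1os}) on $\Gr_2(\bbR^4)$ is precisely the one whose flow sends a $2$-plane $V$ based at $z_0$ to $\Phi(z,z_0)V$ based at $z$. Take $V\in\Lambda(2)$ and choose a basis $\{u_0,v_0\}$ of $V$. Since $\omega$ is an alternating bilinear form, the condition that $V$ be Lagrangian is equivalent to the single scalar equation $\omega(u_0,v_0)=0$. Let $u(z)$ and $v(z)$ be the solutions of (\ref{eval eqn 1os}) with $u(z_0)=u_0$ and $v(z_0)=v_0$; then $\Phi(z,z_0)V=\mathrm{sp}\{u(z),v(z)\}$, and $u(z),v(z)$ remain linearly independent for all $z$ because $\Phi(z,z_0)$ is invertible.

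Now apply Theorem~\ref{thm Lambda n invariant}: the quantity $\Omega(u,v)=e^{cz}\omega(u(z),v(z))$ is independent of $z$. Evaluating at $z=z_0$ gives $\Omega(u,v)=e^{cz_0}\omega(u_0,v_0)=0$, hence $\omega(u(z),v(z))=0$ for every $z$. By bilinearity and skew-symmetry of $\omega$, this forces $\omega$ to vanish identically on $\mathrm{sp}\{u(z),v(z)\}$, i.e.\ $\Phi(z,z_0)V\in\Lambda(2)$ for all $z$. Since $V\in\Lambda(2)$ was arbitrary and $\Phi$ is invertible, $\Lambda(2)$ is invariant (forwards and backwards) under the induced flow, which is the claim.

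I do not expect a genuine obstacle here; the only point deserving care is the identification of the abstract ``equation induced on $\Gr_2(\bbR^4)$'' with the concrete action $V\mapsto\Phi(z,z_0)V$, together with the observation that Lagrangian-ness is a basis-independent, closed condition detected by a single evaluation of $\omega$ on any spanning pair. Both facts are routine once the framework of Theorem~\ref{thm Lambda n invariant} is in place, so the proof is essentially a two-line deduction.
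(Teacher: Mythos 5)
Your proof is correct and follows exactly the route the paper takes: the paper presents this theorem as an immediate corollary of Theorem~\ref{thm Lambda n invariant}, noting that $\Omega=e^{cz}\omega$ is constant along pairs of solutions, so a plane on which $\omega$ vanishes at one $z$ remains Lagrangian for all $z$. Your write-up simply fills in the routine details (basis-independence of the Lagrangian condition and the identification of the induced flow with $V\mapsto\Phi(z,z_0)V$) that the paper leaves implicit.
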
 The case $k=2$ is of interest because the \emph{stable} and \emph{unstable bundles} are two-dimensional. Indeed, $A_\infty(0)$ has two positive and two negative eigenvalues by (\ref{evals of lin}), and the eigenvalue split can only change across the essential spectrum. It follows that $A_\infty(\lambda)$ has two eigenvalues each of positive and negative real part for all $\lambda$ with $\mathrm{Re}\,\lambda\geq\beta$. It is then standard (cf. Theorem 3.2 of \cite{Sandstede02}) that (\ref{eval eqn 1os}) admits exponential dichotomies on $\bbR^+$ and $\bbR^-$ with the same Morse index, and we can define \begin{equation}\label{(un)stable bundles}
\begin{aligned}
E^u(\lambda,z) & = \{\xi(z)\in\bbC^{2n}:\xi \text{ solves } (\ref{eval eqn 1os}) \text{ and } \xi\rightarrow 0 \text{ as }z\rightarrow -\infty \}\\
E^s(\lambda,z) & = \{\xi(z)\in\bbC^{2n}:\xi \text{ solves } (\ref{eval eqn 1os}) \text{ and } \xi\rightarrow 0 \text{ as }z\rightarrow \infty \}
\end{aligned}.
\end{equation} These spaces vary analytically in $\lambda$ and contain all of the solutions of (\ref{eval eqn 1os}) that decay as $z\rightarrow-\infty$ ($E^u(\lambda,z)$) or as $z\rightarrow\infty$ ($E^s(\lambda,z)$). Furthermore, the decay of any solution is exponential in $z$. It follows that $\lambda\in\bbC$ is an eigenvalue of $L$ if and only if \begin{equation}
E^u(\lambda,z)\cap E^s(\lambda,z)\neq\{0\}
\end{equation} for some (and hence all) $z\in\bbR$. The following crucial fact is proved in \cite{CJ17}. \begin{theorem}[Theorem 1 of \cite{CJ17}]\label{un/stable bundles Lagrangian} For each $\lambda\in(\bbR^+\cup\{0\})$, $E^{u/s}(\lambda,z)$ are Lagrangian subspaces for all $z\in\bbR$. 
\end{theorem}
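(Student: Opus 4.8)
The plan is to leverage Theorem~\ref{thm Lambda n invariant}, which tells us that for fixed real $\lambda$ the form $\Omega(\cdot,\cdot):=e^{cz}\omega(\cdot,\cdot)$ is constant along any pair of solutions of (\ref{eval eqn 1os}), and combine it with the behaviour of $A(\lambda,z)$ as $z\to\pm\infty$. The observation that does the work is elementary: if $u,v$ solve (\ref{eval eqn 1os}) and both decay as $z\to-\infty$, then $\omega(u(z),v(z))=e^{-cz}\Omega(u,v)$ for all $z$; so if one can show the left-hand side tends to $0$ faster than $e^{-cz}$ grows as $z\to-\infty$, the constant $\Omega(u,v)$ is forced to vanish. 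By the definition (\ref{(un)stable bundles}), every element of $E^u(\lambda,z)$ is the time-$z$ value of such a solution, so this would give $\omega|_{E^u(\lambda,z)}\equiv0$, i.e. $E^u(\lambda,z)\in\Lambda(2)$, for every $z$. The bundle $E^s(\lambda,z)$ is treated identically with $z\to-\infty$ replaced by $z\to+\infty$.

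First I would record the spectral geometry of the limiting matrix $A_\infty(\lambda)$. The algebraic identity underlying Theorem~\ref{thm Lambda n invariant} is $A(\lambda,z)^\top J+JA(\lambda,z)=-cJ$, and passing to the limit (equivalently, reading off the constant part of $A$) gives $A_\infty(\lambda)^\top J+JA_\infty(\lambda)=-cJ$. Hence $A_\infty(\lambda)$ is conjugate to $-A_\infty(\lambda)^\top-cI$, so $\sigma(A_\infty(\lambda))$ is invariant under the reflection $\mu\mapsto-c-\mu$ about $-c/2>0$. For $\lambda\in\bbR^+\cup\{0\}$ we sit to the right of the essential spectrum, so $A_\infty(\lambda)$ has eigenvalues $\mu_1,\mu_2$ with $\mathrm{Re}\,\mu_i<0$ and two with positive real part; by the reflection the latter two are $-c-\mu_1$ and $-c-\mu_2$, with real parts $-c-\mathrm{Re}\,\mu_i>-c>0$. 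The point is that both unstable eigenvalues lie strictly to the right of $-c$, not merely of $0$; this is the quantitative ingredient that makes the limit argument close.

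Next I would carry out the limiting estimate. For $u,v\in E^u(\lambda,\cdot)$ the exponential dichotomy on $\bbR^-$ gives, for any small $\delta>0$, bounds $|u(z)|,|v(z)|\le C_\delta e^{(\nu-\delta)z}$ as $z\to-\infty$, where $\nu:=\min\{\mathrm{Re}(-c-\mu_1),\mathrm{Re}(-c-\mu_2)\}>-c$ (the slack $\delta$ also absorbs any polynomial prefactor coming from a non-semisimple eigenvalue). Since $|\omega(u(z),v(z))|\le|u(z)|\,|v(z)|$, we get $|e^{cz}\omega(u(z),v(z))|\le C_\delta^2 e^{(c+2\nu-2\delta)z}$, and $c+2\nu-2\delta>c+2(-c)-2\delta=-c-2\delta>0$ once $\delta$ is chosen small. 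Hence $e^{cz}\omega(u(z),v(z))\to0$ as $z\to-\infty$, so $\Omega(u,v)=0$; as $u,v$ range over all of $E^u(\lambda,\cdot)$ this shows $E^u(\lambda,z)$ is Lagrangian for every $z$. For $E^s(\lambda,z)$ the dichotomy on $\bbR^+$ gives $|u(z)|,|v(z)|\le C_\delta e^{(\eta+\delta)z}$ as $z\to+\infty$ with $\eta:=\max\{\mathrm{Re}\,\mu_1,\mathrm{Re}\,\mu_2\}<0$, whence $|e^{cz}\omega(u(z),v(z))|\le C_\delta^2 e^{(c+2\eta+2\delta)z}$ with $c+2\eta+2\delta<0$ for small $\delta$, so again $\Omega\equiv0$ and $E^s(\lambda,z)$ is Lagrangian.

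I expect the main obstacle to be the sharpness needed in the $E^u$ estimate. Because $c<0$, the factor $e^{-cz}$ also decays as $z\to-\infty$, so merely knowing that unstable solutions decay there is not enough; one genuinely needs the rate inequality $2\nu>-c$, and this is precisely what the reflection symmetry $\mu\mapsto-c-\mu$ of $\sigma(A_\infty(\lambda))$ provides. The remaining points are routine: the non-semisimple case only inserts polynomial factors, which the $e^{\pm\delta z}$ slack swallows, and the whole argument uses nothing about $\lambda$ beyond it being real and off the essential spectrum, so it covers the stated range $\lambda\in\bbR^+\cup\{0\}$ (indeed all real $\lambda\ge\beta$).
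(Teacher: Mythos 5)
Your proof is correct. The paper does not prove this theorem itself (it is imported from \cite{CJ17}), but the argument there is essentially the one you give: constancy of $\Omega=e^{cz}\omega$ along pairs of solutions (Theorem \ref{thm Lambda n invariant}) combined with the decay rates of solutions in the un/stable bundles at the appropriate infinity; you also correctly isolate and handle the only delicate point, namely that for $E^u(\lambda,z)$ one needs the unstable eigenvalues of $A_\infty(\lambda)$ to lie strictly to the right of $-c/2$, which the spectral symmetry $\mu\mapsto-c-\mu$ coming from $A_\infty(\lambda)^\top J+JA_\infty(\lambda)=-cJ$ supplies.
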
 $E^{u/s}(\lambda,z)$ therefore each define two-parameter curves in $\Lambda(2)$. The Maslov index \cite{Maslov,Arnold67,Arnold85} counts how many times such a curve intersects a particular hypersurface in $\Lambda(2)$. For a fixed plane $V\in\Lambda(2)$, the \emph{train} of $V$ is defined to be \begin{equation}\label{train defn}
\Sigma(V)=\{V'\in\Lambda(2):\dim(V\cap V')>0 \}.
\end{equation} There is a clear partition of this set $\Sigma(V)=\Sigma_1(V)\cup\Sigma_2(V)$, with \begin{equation}
\Sigma_i(V)=\{V\in\Lambda(2):\dim(V\cap V')=i\}.
\end{equation} In particular, $\Sigma_2(V)=\{V\}$. It is shown in \S 2 of \cite{Arnold67} that $\overline{\Sigma_1(V)}=\Sigma(V)$, and $\Sigma_1(V)$ is an oriented codimension-one submanifold of $\Lambda(2)$. Thus the Maslov index can be defined for any curve $\gamma:[a,b]\rightarrow\Lambda(2)$ which only intersects $\Sigma(V)$ through $\Sigma_1(V)$ to be the signed count of intersections with $\Sigma_1(V)$. In \cite{RS93}, the definition of the Maslov index was expanded to include curves that intersect $\Sigma(V)$ in any stratum. The key was to make precise the notion of a curve intersecting $\Sigma(V)$, which was accomplished through the introduction of the ``crossing form." This is a quadratic form whose signature determines the contribution to the Maslov index at each crossing, called \emph{conjugate points}. For a quadratic form $Q$, we denote by $n_+(Q)$ and $n_-(Q)$, respectively, the positive and negative indicies of inertia of $Q$ (see page 187 of \cite{vinberg}). The signature of $Q$ is then defined by \begin{equation}\label{sig defn}
\mathrm{sign}(Q)=n_+(Q)-n_-(Q).
\end{equation} As mentioned above, for a curve $\gamma:[a,b]\rightarrow\Lambda(2)$ parametrized by $t$, a value $t=t^*$ such that $\gamma(t^*)\cap V\neq\{0\}$ is called a conjugate point. A conjugate point is called \emph{regular} if the crossing form--defined on the intersection $\gamma(t^*)\cap V$--is nondegenerate. 

Rather than the define the crossing form abstractly (see \cite{RS93} or \cite{CJ17}), we will focus directly on the problem at hand. To define crossings (and hence the Maslov index), one needs a curve and a reference plane. The curve we will consider is the unstable bundle $E^u(0,z)$. For technical reasons explained in \S 5 of \cite{CJ17}, the domain of the curve $z\mapsto E^u(0,z)$ is taken to be $(-\infty,\tau]$, where $\tau$ is chosen large enough so that \begin{equation}\label{tau req}
E^s(0,z)\cap V^u(0)=\{0\} \text{ for all } z\geq\tau.
\end{equation} The reference plane is then taken to be $E^s(0,\tau)$, for the same value $\tau$. It follows that there is a conjugate point at $z=\tau$, since $\vp'(\tau)\in E^u(0,\tau)\cap E^s(0,\tau)$. This conjugate point encodes the translation invariance of $\vp$, and it is shown in \cite{CJ17} that it plays a distinguished role in the stability analysis of $\vp$. Intuitively, we think of the curve $z\mapsto E^u(0,z)$ as shooting the ``left boundary data" forward and counting intersections with the ``right boundary data" $E^s(0,\tau)$. In this sense, the Maslov index is very much a generalization of Sturm-Liouville theory on an interval. Let $z=z^*$ be a conjugate point for the unstable bundle. The \emph{crossing form} is defined by \begin{equation}\label{xing form defn}
\Gamma(E^u(0,\cdot),E^s(0,\tau),z^*)(\xi)=\omega(\xi,A(0,z^*)\xi),
\end{equation} for $\xi\in E^u(0,z^*)\cap E^s(0,\tau)$, and $A(0,z^*)$ as in (\ref{eval eqn abbr}). This form is derived in Theorem 3 of \cite{CJ17}. We can then define the Maslov index of the traveling wave $\vp$ as follows. \begin{define}\label{Maslov defn}
	Let $\tau\gg 1$ satisfy (\ref{tau req}). The \textbf{Maslov index} of $\vp$ is given by \begin{equation}
	\Maslov:=\sum_{z^*\in(-\infty,\tau)}\mathrm{sign}\Gamma(E^u(0,\cdot),E^s(0,\tau),z^*)+n_+(\Gamma(E^u(0,\cdot),E^s(0,\tau),\tau),
	\end{equation} where the sum is taken over all interior crossings of $E^u(0,z)$ with $\Sigma(E^s(0,\tau)).$
\end{define} It is proved in \S 1 of  \cite{CH} that this definition is independent of $\tau$, provided that (\ref{tau req}) is satisfied.

Two remarks about this definition are in order. First, the Maslov index for paths with distinct endpoints can only be defined if all crossings are regular, see \cite{RS93}. However, this is not an issue here, since irregular crossings (i.e. those for which $\Gamma$ is degenerate) are non-generic (\S 2 of \cite{Arnold67}). We can therefore perturb away from them by changing $\tau$, which clearly moves the train $E^s(0,\tau)$, but not the image of the curve $E^u(0,z)$ (other than its right endpoint). Since the Maslov index definition is independent of $\tau$, we can rest assured that all crossings are regular. Moreover, the calculation of $\Maslov$ in the next section detects all conjugate points, and we calculate directly from (\ref{xing form defn}) that the crossing is regular in each case.

Second, we wish to justify the choice of adding $n_+(\Gamma)$ for the conjugate point $z=\tau$. This is actually different from the convention of \cite{RS93}, in which $1/2$ times the signature of each endpoint crossing form is added to the Maslov index. We prefer to follow the convention of \cite{HLS16} to ensure that $\Maslov$ is an integer. The endpoint contribution to the Maslov index is merely convention, as long as the index remains additive vis-\`{a}-vis concatenation of curves. For example, our choice of convention demands that $-n_-(\Gamma)$ is used as the contribution at a left endpoint crossing, so that $\mathrm{sign}\,\Gamma$ is recovered if two curves are concatenated. This left endpoint term does not appear in Definition \ref{Maslov defn} because there is no left endpoint crossing; $E^u(0,-\infty)=V^u(0)$, which is transverse to $E^s(0,\tau)$, by (\ref{tau req}). The choice of $n_+(\Gamma)$ over $-n_-(\Gamma)$ for the right endpoint is made so that the following theorem holds. \begin{theorem}[Theorem 5.1 of \cite{corn}]\label{Morse = Maslov thm} Define the Morse index $\mathrm{Mor}(L)=|\sigma(L)\cap\{\lambda\in\bbC:\mathrm{Re}\,\lambda\geq 0\}|$ to be the number of unstable eigenvalues of $L$, counted with algebraic multiplicity. Then 
	\begin{equation}
	\Maslov=\Mor(L).
	\end{equation} 
\end{theorem}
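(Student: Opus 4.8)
This is a ``Maslov $=$ Morse'' statement, and the plan is to prove it by a spectral-homotopy argument: introduce the $\lambda$-dependent index $\mu(\lambda):=\Mas\bigl(z\mapsto E^u(\lambda,z);\,E^s(\lambda,\tau_\lambda);\,(-\infty,\tau_\lambda]\bigr)$, where for each $\lambda\geq0$ the number $\tau_\lambda\gg1$ is chosen (as in (\ref{tau req}), which has an evident analogue for $\lambda>0$) so that $E^s(\lambda,z)\cap V^u(\lambda)=\{0\}$ for $z\geq\tau_\lambda$, and where the right endpoint contributes $n_+(\Gamma)$ while the left endpoint contributes nothing because $E^u(\lambda,-\infty)=V^u(\lambda)$ is transverse to $E^s(\lambda,\tau_\lambda)$. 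By Definition \ref{Maslov defn} we have $\mu(0)=\Maslov$, so it suffices to evaluate $\mu$ at a large value of $\lambda$ and to track its variation in between.

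For the soft reductions: by (\ref{ess spec half plane}) the essential spectrum lies in $\mathcal{K}=\{\mathrm{Re}\,\lambda<K\}$ with $K<0$, and the point spectrum of $L$ in $\bbC\setminus\mathcal{K}$ is discrete with finite algebraic multiplicities and bounded a priori (standard for the second-order operator $L$; a crude upper bound on $\mathrm{Re}\,\lambda$ follows from an energy estimate for (\ref{eval equation formal}), pairing $LP=\lambda P$ with $P$). Hence there are finitely many eigenvalues with $\mathrm{Re}\,\lambda\geq0$, and by Lemma \ref{realness of spectrum} they form a finite set $0=\lambda_0<\lambda_1<\dots<\lambda_m$ in $\bbR$; write $n_j$ for the algebraic multiplicity of $\lambda_j$, so $\Mor(L)=\sum_{j=0}^m n_j$. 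Fix $\lambda_\infty>\lambda_m$ with $A_\infty(\lambda_\infty)$ hyperbolic. Since no $\lambda\geq\lambda_\infty$ is an eigenvalue, $E^u(\lambda_\infty,z)\cap E^s(\lambda_\infty,z)=\{0\}$ for all $z$; and a Gronwall-type estimate in a chart of $\Gr_2(\bbR^4)$ based at $E^s(\lambda_\infty,\tau_{\lambda_\infty})$ --- exploiting that for $\lambda$ large the unstable and stable dichotomy subspaces of $A(\lambda,z)$ are uniformly spectrally separated --- shows that $z\mapsto E^u(\lambda_\infty,z)$ has no conjugate points at all on $(-\infty,\tau_{\lambda_\infty}]$, i.e.\ $\mu(\lambda_\infty)=0$.

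The heart of the argument is the continuation of $\mu$ in $\lambda$. The curve $z\mapsto E^u(\lambda,z)$ and the reference $E^s(\lambda,\tau_\lambda)$ depend continuously on $\lambda\in[0,\lambda_\infty]$ --- analytically, via the exponential dichotomies on $\bbR^\pm$ of common Morse index, with $\tau_\lambda$ varied continuously and re-chosen when necessary (harmless by the $\tau$-independence of $\Mas$, cf.\ \cite{CH}). Homotopy invariance of the Maslov index then implies that $\mu(\lambda)$ is constant except where a conjugate point passes through an endpoint of $(-\infty,\tau_\lambda]$. At the left endpoint this never happens, since $V^u(\lambda)$ stays transverse to $E^s(\lambda,\tau_\lambda)$ throughout $[0,\lambda_\infty]$; interior conjugate points may be created, destroyed, or moved, but affect $\mu$ only when they reach the right endpoint $z=\tau_\lambda$, which occurs precisely when $E^u(\lambda,\tau_\lambda)\cap E^s(\lambda,\tau_\lambda)\neq\{0\}$, i.e.\ exactly at the $\lambda_j$. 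The size of the resulting jump is dictated by the $z=\tau_\lambda$ crossing form; and here monotonicity enters --- the estimates behind the variational existence proof (cf.\ \S 2 of \cite{CC15} and the discussion in \S 1) force the crossing form \emph{in the spectral parameter} $\lambda$ to be sign-definite, so every eigenvalue crossing is regular and $\lambda_j$ contributes exactly $n_j$ to the total variation of $\mu$. Integrating from $\lambda_\infty$ down to $0$ --- the eigenvalue at $\lambda_0=0$ being recorded by the $n_+(\Gamma)$ endpoint term already present in $\mu(0)$ --- gives $\Maslov=\mu(0)=\sum_{j=0}^m n_j=\Mor(L)$.

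The main obstacle is the right-endpoint analysis as $\lambda$ passes an eigenvalue $\lambda_j$: one must show that the Robbin--Salamon crossing form in $\lambda$ reproduces the \emph{algebraic} multiplicity of $\lambda_j$ (equivalently, the order of vanishing of the Evans function), not merely its geometric multiplicity. This is exactly where the monotonicity of the Maslov index in $\lambda$ is indispensable: it is a genuine input, coming from the estimates underpinning the variational construction of \cite{CC15}, and without it the $\lambda$-crossing form could be indefinite, the jumps in $\mu$ could partially cancel, and the clean equality would collapse --- the same mechanism that, in the \emph{spatial} variable, produces the four offsetting conjugate points computed in \S 4. The remaining technical items --- the a priori eigenvalue bound, the vanishing $\mu(\lambda_\infty)=0$, the transversality of $V^u(\lambda)$ to $E^s(\lambda,\tau_\lambda)$ along the whole parameter interval, and the $\tau$-independence permitting $\tau_\lambda$ to be adjusted --- are routine given the exponential dichotomies and Lemma \ref{realness of spectrum} already established.
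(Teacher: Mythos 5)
The paper does not prove this theorem itself --- it is imported verbatim from \cite{corn}, with the remark that ``the proof of this theorem is given in \S 5 of \cite{corn}'' --- and your spectral-homotopy (``Maslov box'') argument, with the top shelf $\mu(\lambda_\infty)=0$, the transverse left edge, and monotonicity of the $\lambda$-crossing form on the right edge supplying the eigenvalue count, is precisely the strategy of that cited proof as the paper describes it in \S 1 and \S 3. Your proposal is therefore essentially the same approach; the one point to phrase more carefully is that monotonicity in $\lambda$ alone yields the \emph{geometric} multiplicity at each crossing, and the upgrade to \emph{algebraic} multiplicity goes through the link between the crossing form and the order of vanishing of the Evans function (Theorem 5.2 of \cite{corn} together with Lemma 3 of \cite{CJ17}), which you gesture at parenthetically but should make explicit.
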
 The proof of this theorem is given in \S 5 of \cite{corn}. What is nice about this result is that the spectral information needed to prove that $\vp$ is stable is contained entirely in the variational equation for (\ref{FHN traveling wave ODE}) along $\vp$, since $\lambda=0$ in the calculation of $\Maslov$. In the next section, we carry out the calculation showing that $\Maslov=0$. This, in turn, proves the main result of this work. \begin{theorem}\label{thm stability of wave} For $\eps>0$ sufficiently small, the traveling waves $\vp_\eps(z)$ guaranteed to exist by Theorem \ref{thm pulse existence} are stable in the sense of Definition \ref{stability defn}.
\end{theorem}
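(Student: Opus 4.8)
The plan is to reduce the stability claim, via Theorem~\ref{nonlinear stab theorem}, to the single identity $\Maslov=0$, and then to compute the Maslov index directly from the singular structure of $\vp_\eps$ established in \S2.

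First I would assemble the spectral reductions already in hand. Condition (2) of Theorem~\ref{nonlinear stab theorem} --- simplicity of the zero eigenvalue --- holds because $\vp_\eps$ is transversely constructed (Theorem~\ref{thm pulse existence}), as noted immediately after Theorem~\ref{nonlinear stab theorem}. For condition (1), I would combine the essential-spectrum bound $\sigma_{\mathrm{ess}}(L)\subset\mathcal{K}$ with $K<0$, Lemma~\ref{realness of spectrum} (no non-real point spectrum with $\mathrm{Re}\,\lambda\geq-c^2/8$), and the simplicity of $0$ to produce an interval $(-\delta,\delta)\subset\bbR$ free of spectrum; taking $\beta=\max\{-\delta,-c^2/8,K\}$, the only eigenvalues that could violate (1) are real and positive. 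By Theorem~\ref{Morse = Maslov thm}, the number of such eigenvalues, counted with multiplicity, is exactly $\Maslov$. Hence it suffices to prove $\Maslov=0$, which by Definition~\ref{Maslov defn} amounts to locating every conjugate point of the curve $z\mapsto E^u(0,z)$ on $(-\infty,\tau]$ relative to the reference plane $E^s(0,\tau)$ and showing that the signed contributions (plus the endpoint term $n_+(\Gamma)$ at $z=\tau$) sum to zero.

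Next I would exploit the identity $E^u(0,z)=T_{\vp_\eps(z)}W^u(0)$ from \cite{CJ17} to recast the problem as tracking a Lagrangian $2$-plane in the tangent bundle over the singular orbit. Using Fenichel theory and the Exchange Lemma (Theorem~\ref{Exchange lma}), I would follow this plane through the four segments of the singular template: (i) along the fast front $q_f(z)$, where Lemma~\ref{F1 lemma} controls the disposition of $W^{cu}(0)$ relative to $W^s(M_0^R)$; (ii) up the slow manifold $M_\eps^R$, where the Exchange Lemma forces $T_{\bar q}W_\eps^{cu}(0)\approx T_qW^u(q)\oplus\mathrm{sp}\{\p_vH(q)\}$; (iii) along the fast back $q_b(z)$, controlled by Lemma~\ref{F2 lemma} together with the monotonicity facts about $u_b,w_b$ recorded in \S2.4; and (iv) down $M_\eps^L$ back to the origin near $z=\tau$. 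At each stage I would determine $E^u(0,z)\cap E^s(0,\tau)$ and evaluate the crossing form $\omega(\xi,A(0,z^*)\xi)$ from \eqref{xing form defn} on it, checking regularity directly from \eqref{xing form defn}. The expected outcome, consistent with the discussion in the introduction, is four regular interior conjugate points --- two tied to the fast jumps (in analogy with the scalar Sturm--Liouville count of critical points of $\hat u$) and two arising from the slow passages --- whose signatures form a cancelling pattern, together with the distinguished endpoint crossing at $z=\tau$ encoding translation invariance, at which $\vp'(\tau)$ spans the intersection and the crossing form is negative, so $n_+(\Gamma)=0$. Summing yields $\Maslov=0$.

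The main obstacle is steps (ii)--(iii): converting the $C^1$-control of $W_\eps^{cu}(0)$ provided by the Exchange Lemma into the limiting behavior, as $\eps\to0$, of the Lagrangian plane $E^u(0,z)$ across the ``corners'' where fast dynamics hands off to slow dynamics and back. One must pin down which slow direction is selected in the tangent bundle, show that no spurious conjugate points are hidden inside the $O(\eps^{-1})$-long slow segments, and handle the gluing between the four regimes. Because the Maslov index is \emph{not} monotone in $z$ here, finding one crossing is insufficient --- the enumeration of conjugate points must be shown to be exhaustive, which I expect to require a careful transversality analysis at each geometric regime (and at the junctions between them) carried out in Pl\"ucker coordinates as in the appendices. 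Once all four crossings and their signatures are identified and shown to offset, $\Maslov=0$ follows; hence conditions (1)--(2) of Theorem~\ref{nonlinear stab theorem} hold, and $\vp_\eps$ is stable in the sense of Definition~\ref{stability defn}.
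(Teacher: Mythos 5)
Your reduction is exactly the paper's: conditions (1)--(2) of Theorem~\ref{nonlinear stab theorem} plus Lemma~\ref{realness of spectrum} and Theorem~\ref{Morse = Maslov thm} reduce everything to $\Maslov=0$, and the index is then computed by identifying $E^u(0,z)$ with $T_{\vp(z)}W^u(0)$ and tracking that plane along the singular orbit with Fenichel theory and the Exchange Lemma. That half is correct and complete. The gap is in the bookkeeping of conjugate points, where your predicted outcome does not match what the computation actually yields. There are \emph{three} interior conjugate points, not four: one on the fast front (sign $-1$), one during the passage near $M_\eps^R$ (sign $+1$, located where $f'(u)=f'(u_\tau)$ so that $\vp'$ becomes parallel to the slow tangent direction in $E^s(0,\tau)$), and one on the fast back (sign $-1$). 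The fourth conjugate point is not an interior crossing on $M_\eps^L$; it \emph{is} the endpoint crossing at $z=\tau$, since $\vp'(\tau)\in E^u(0,\tau)\cap E^s(0,\tau)$ by translation invariance, and one must separately verify that no other crossing occurs on the left slow branch. Moreover the crossing form there is \emph{positive} (the sign is $-\mathrm{sign}\,\omega(\eta_2(v),\p_v\eta_2(v))|_{v=v_\tau}$ because $v$ decreases on $M_0^L$, and $\omega(\eta_2,\p_v\eta_2)=g''(v)/c<0$), so $n_+(\Gamma)=1$ and the endpoint contributes $+1$. Your version --- a negative endpoint form contributing $0$, offset by an extra interior crossing --- reaches the total $0$ only because two misidentifications cancel; had you found the correct three interior crossings and then assigned $n_+(\Gamma)=0$ at $\tau$, you would have concluded $\Maslov=-1$ and (falsely) instability.

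A second, structural omission: the technical heart of the calculation is showing that the three ``corners'' $p$, $q$, $\hat q$ contribute no conjugate points. This is not a routine transversality check; it requires (i) using the Lagrangian constraint to decide which unstable direction ($\eta_4$ rather than $\eta_3$) is selected when $W^u(0)$ is crushed against $W^u(M_\eps^R)$, (ii) realizing the corner transition as a heteroclinic orbit of the constant-coefficient flow induced on $\Lambda(2)$ (e.g.\ $X_{13}\to X_{24}$ at $p$ and $X_{24}\to X_{34}$ at $q$), and (iii) an explicit Pl\"ucker-coordinate estimate (the $2\sqrt{AC}-B>0$ computation at $p$, and a sign comparison of the detection form at $q$ that depends on identifying which of the two connecting orbits $\gamma_\pm$ is actually traversed, via the sign of the constant $K$ in (\ref{y soln constant})). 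Your proposal acknowledges that the corners must be handled but supplies no mechanism for ruling out crossings there; since the index is not monotone in $z$, exhaustiveness of the conjugate-point list is precisely what the proof must deliver, and this is where the remaining work lies.
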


\section{Calculating the Maslov Index}
Recall that $\Maslov$ is calculated by following the curve $E^u(0,z)$ from $z=-\infty$ to $z=\tau$. Solving (\ref{linearization around wave}) directly (thus determining the curve of interest) is a tall order, since that equation is nonautonomous and dependent on $\eps$. Instead, we will take advantage of the well-known fact that $E^u(0,z)$ is tangent to $W^u(0)$ along $\vp$ (cf. \S 6 of \cite{CJ17}). This manifold can be followed around phase space using the timescale separation, which makes the calculation of the index tractable. The strategy is to inspect each fast and slow piece of $\vp$ separately, as well as the transitions between them. We will show in this section that each of the four segments contains one one-dimensional crossing, two of which are positive and two of which are negative. Adding these together gives $\Maslov=0$, which proves Theorem \ref{thm stability of wave}. 

Just like $E^u(0,z)$ is tangent to $W^u(0)$ along $\vp$, so is $E^s(0,z)$ tangent to $W^s(0)$. It follows that the reference plane $E^s(0,\tau)$ is given by $T_{\vp(z)}W^s(0)$, where $\vp(z)$ is as close as we like to returning to $0$. By Fenichel theory, this subspace is spanned (to leading order) by the tangent vector to $M_\eps^L$ and the stable eigenvector of the same point on the critical manifold. We label the components of this point $\vp(\tau)=(u_\tau,v_\tau,w_\tau,y_\tau)$. Throughout the calculation, we will make heavy use of the robustness of transverse intersections. More precisely, the train of $E^s(0,\tau)$ is a codimension one subset of $\Lambda(2)$. If the curve $E^u(0,z)$ crosses it transversely for some value $z=z^*$, then the crossing would persist for sufficiently small perturbations of both the curve and the reference plane. We are therefore justified in taking the leading order approximations of both $E^u(0,z)$ and $E^s(0,\tau)$. This allows us to search for intersections on the fast and slow timescales with $\eps=0$, which is significantly easier. In particular, we can use the singular value $c=c^*$ throughout. We drop the $^*$ for the rest of the section.

The most difficult part of the calculation is proving the nonexistence of conjugate points near the three corners where transitions between fast and slow dynamics occur. Near these points, the $\eps\rightarrow 0$ limit of the curve $T_{\vp(z)}W^u(0)\subset\Lambda(2)$ has jump discontinuities, so we must figure out how these gaps are bridged when $\eps>0$ is small. This is accomplished by analyzing the flow induced on $\Lambda(2)$ by a constant coefficient linear system. The phase portrait for such an equation is completely understood, and the relevant details are recorded in Appendix B. In Appendix A, we give a brief overview of Pl\"{u}cker coordinates, which are used to write down the equation that is induced by (\ref{linearization around wave}) on $\Lambda(2)$. Indeed, (\ref{linearization around wave}) induces a derivation on $\bigwedge^2\bbR^4$, and the Pl\"{u}cker embedding allows us to realize points in $\Lambda(2)$ as elements of $\bigwedge^2\bbR^4$. For convenience of the reader, we partition this section into subsections wherein each piece of the wave is considered separately.

\subsection{First Fast Jump}
As explained in \S 2.2, we have a very clear picture of the unstable bundle (i.e. of $T_{\vp(z)}W^u(0)$) along the fast front; at each point along the orbit, it is $O(\eps)$ close to $W^u(0_f)$, the unstable manifold for $0$ in (\ref{FHN layer problem}). The latter is just a cylinder over the Nagumo front, so its tangent space at any point along the jump is known. In anticipation of computing the crossing form, we include the $\delta v$ component, even though it will be $0$ for both basis vectors. We can differentiate (\ref{fast jump profile}) with respect to $u$ to determine one vector tangent to $W^u(0)$, and the other is given by the invariant $y$ direction. We therefore have \begin{equation}\label{fast front tan space}
T_{\vp(z)}W^u(0)\approx\mathrm{sp}\left\{\left[\begin{array}{c}
1\\
0\\
\sqrt{2}/2-u\sqrt{2}\\
0
\end{array}\right],\left[\begin{array}{c}
0\\0\\0\\1
\end{array}\right] \right\}.
\end{equation} To detect conjugate points, we need a working basis for $E^s(0,\tau)$. In light of the discussion at the beginning of this section, one basis vector is found by differentiating the equation defining $M_0$ in (\ref{critical mfd}) with respect to $v$. The other is computed by finding the stable eigenvector for the linearization of (\ref{FHN layer problem}) around $\vp(\tau)$ with $\eps=0$. It is then a calculation to see that \begin{equation}\label{ref plane}
E^s(0,\tau)\approx \mathrm{sp}\left\{\left[\begin{array}{c}
1\\f'(u_\tau)\\0\\\frac{1}{c}(\gamma f'(u_\tau)-1)
\end{array}\right],\left[\begin{array}{c}
f'(u_\tau)\\0\\f'(u_\tau)\mu_1(u_\tau)\\ \mu_1(u_\tau)
\end{array}\right] \right\},
\end{equation} where $\mu_1(u_\tau)$ is the stable eigenvalue for the linearization of (\ref{FHN layer problem}) about $\vp(\tau)$. (Notice that the eigenvalues and eigenvectors for points in $M_0^L$ can be written as a function of $u$.) We use $\approx$ to remind the reader that this the leading order (in $\eps$) approximation to $E^s(0,\tau)$. Although we could use (\ref{ref plane}) directly to find conjugate points, the calculation would be tedious due to the way $\mu_1(u_\tau)$ depends on $u$. Instead, we claim that the Maslov index contribution is the same if we instead look for intersections with the train of \begin{equation}\label{stable subspace basis}
V^s(0)=\mathrm{sp}\left\{\left[\begin{array}{c}
1\\-a\\0\\ \frac{1}{c}(1+\gamma a)
\end{array}\right],\left[\begin{array}{c}
1\\0\\-a\sqrt{2}\\ \sqrt{2}
\end{array}\right] \right\},
\end{equation} which one obtains by substituting $u=0$ in (\ref{ref plane}) and using (\ref{singular c value}) and (\ref{layer lin 0 evals}). Indeed, we know that $u_\tau\rightarrow 0$ as $\tau\rightarrow\infty$, so that $\Sigma(E^s(0,\tau))$ will be very close to $\Sigma(V^s(0))$ as long as $\tau$ is large enough. Thus, as long any crossings of $E^u(0,z)$ with $V^s(0)$ are one-dimensional and transverse, then $E^u(0,z)$ would have to cross $E^s(0,\tau)$ nearby and in the same direction. 

Using this new reference plane, we see from (\ref{fast front tan space}) and (\ref{stable subspace basis}) that an intersection occurs if and only if \begin{equation}\label{fast front xing condition}
u=a+\frac{1}{2}.
\end{equation}
Since $u$ increases monotonically along the fast jump from $0$ to $1$, it follows that there is a unique conjugate point, and the intersection is spanned by $\xi:=[1,0,-a\sqrt{2},\sqrt{2}]^T$. To determine the direction of the crossing, we evaluate $\Gamma$ from (\ref{xing form defn}) on this vector. We call the conjugate point $z^*$, and use (\ref{omega defn}) to compute: \begin{equation}\label{F1 crossing form}
\begin{aligned}
\omega(\xi,A(0,z^*)\xi)=\langle \xi,JA(0,z^*)\xi\rangle & =-f'(u)+ca\sqrt{2}-2a^2\\
 & = -f'\left(\frac{1}{2}+a\right)+ca\sqrt{2}-2a^2\\
 & = a^2-\frac{1}{4}<0.
\end{aligned}
\end{equation} This shows that the crossing is negative, and we conclude that the contribution to $\Maslov$ is $-1$ along the fast front.

\subsection{First Corner}
Near the first landing point $p=(1,0,0,-1/c)$, the shooting manifold will undergo an abrupt reorientation. As the front approaches $p$, the tangent space to the shooting manifold will be spanned (approximately) by the stable eigenvector of the fixed point $(1,0,-1/c)$ for the fast subsystem and the invariant $y$ direction, as in the previous subsection. Combining (\ref{layer lin 1 evals}) with the observation $f'(1)=a-1$ and the calculation (\ref{ref plane}), we see that  \begin{equation}\label{corner 1 in}
T_{p_\mathrm{in}}W^u(0)\approx\mathrm{sp}\{\eta_1(p),\eta_3(p)\}=\mathrm{sp}\left\{\left[\begin{array}{c}
a-1\\
0\\
(1-a)\frac{\sqrt{2}}{2}\\
-\frac{\sqrt{2}}{2}
\end{array}\right],\left[\begin{array}{c}
0\\0\\0\\1
\end{array}\right] \right\}.
\end{equation}
The subscript ``in'' on $p$ refers to the fact that this is the tangent space to $W^u(0)$ upon entrance into a neighborhood of $p$, as opposed to trip away from $p$, up the slow manifold. The notation $\eta_i(p)$ indicates that the corresponding vector is an eigenvector for (\ref{layer lin 1}) with eigenvalue $\mu_i(p)$.

The next task is to determine the configuration of $W^u(0)$ as it moves up the slow manifold $M_\eps^R$. For this part of the journey, the derivative of the wave is given (to leading order) by the tangent vector to $M_\eps^R$. At $p$, this corresponds to the $0$-eigenvector \begin{equation}
\eta_2(p)=\left[\begin{array}{c}
1\\a-1\\0\\ \frac{1}{c}\left(\gamma(a-1)-1\right)
\end{array}\right].
\end{equation} It is less obvious which is the second direction picked out. Deng's Lemma \cite{Deng89,Schecter08} asserts that $W^u(0)$ will be crushed against $W^u(M_\eps^R)$, the unstable manifold of the right slow manifold. However, there are two unstable directions for each point on the critical manifold, and it is unclear which of these is picked out. (Since the approach to $p$ was in the weak unstable direction, it is not unreasonable to think that this direction would persist.) Thankfully, the symplectic structure is able to break the tie. We know from Theorem \ref{un/stable bundles Lagrangian} that $T_{\vp(z)}W^u(0)$ is a Lagrangian subspace of $\bbR^4$. Since $\Lambda(2)$ is closed in $\Gr_2(\bbR^4)$, the symplectic form $\omega$ must vanish on the leading order approximation to $T_{\vp(z)}W^u(0)$ as well. A direct computation shows that \begin{equation}
\omega(\eta_2(p),\eta_3(p))=1-a \neq 0,
\end{equation} so it must be that \begin{equation}
T_{p_\mathrm{out}}W^u(0)\approx\mathrm{sp}\{\eta_2(p),\eta_4(p) \}=\mathrm{sp}\left\{\left[\begin{array}{c}
1\\a-1\\0\\ \frac{1}{c}\left(\gamma(a-1)-1\right)
\end{array}\right],\left[\begin{array}{c}
1\\
0\\
\sqrt{2}(1-a)\\
-\sqrt{2}
\end{array}\right] \right\}.
\end{equation} 

Again, the vector $\eta_4(p)$ is obtained by using the formula for a generic eigenvector from (\ref{ref plane}) in conjunction with (\ref{layer lin 1 evals}). For the rest of this subsection, we will write $\eta_i$ instead of $\eta_i(p)$. The goal is to show that there are no conjugate points during the transition from $p_\mathrm{in}$ to $p_\mathrm{out}$.  Although this appears to be an $\eps\neq 0$ consideration, it is actually understood by analyzing the constant coefficient linear system obtained by setting $u\equiv 1$ in (\ref{linearization around wave}) with $\eps=0$. Indeed, by taking $\eps$ very small, we can ensure that $W^u(0)$ is as close to $p$ as desired while still maintaining (approximately) the shape of the cylinder. Similarly, since the traveling wave is $C^1$ $O(\eps)$-close to the singular object, the slow (i.e. tangent) direction is picked up arbitrarily close to $p$ on $M^R_\eps$. The second tangent vector is a solution to the linearized equation (\ref{linearization around wave}) with initial condition close to $\eta_3$, which is already an unstable direction. It follows that this direction must remain close to the unstable subspace of $p$, since the wave itself stays arbitrarily close to $p$ during the transition, and the unstable subspace of (\ref{FHN layer problem}) at $p$ is invariant. The above discussion of the symplectic structure then implies that this solution must be bumped to $\eta_4$, the strong unstable direction, during this transition.

Setting $X_{ij}=\mathrm{sp}\{\eta_1,\eta_j\}$ (see the appendix), we therefore must solve a pseudo-boundary value problem to connect the points $T_{p_\mathrm{in}}W^u(0)=X_{13}$ and $T_{p_\mathrm{out}}W^u(0)=X_{24}$ in $\Lambda(2)$ for the equation induced by \begin{equation}\label{corner1 linear problem}
\left(\begin{array}{c}
p\\
q\\
r\\
s
\end{array}\right)'=\left(\begin{array}{c c c c }
0 & 0 & 1 & 0\\
0 & 0 & 0 & 0\\
1-a & 1 & -c & 0\\
-1 & \gamma & 0 & -c
\end{array}\right)\left(\begin{array}{c}
p\\q\\r\\s
\end{array}\right),
\end{equation} which is (\ref{linearization around wave}) evaluated at $\hat{u}=1$. These two points are both equilibria for said equation, since the eigenspaces of the matrix in (\ref{corner1 linear problem}) are invariant. It follows that the desired connection must be a heteroclinic orbit. It is explained in Appendix B that $W^u(X_{13})\cap W^s(X_{24})$ is one-dimensional, so it suffices to find a single point in each distinct orbit to describe the intersection completely. The Schubert cell description of $W^{s/u}(X_{ij})$ makes it easy to see that there are two distinct heteroclinic connections from $X_{13}$ to $X_{24}$. These orbits--call them $\gamma_{\pm}$--pass through the points \begin{equation}\label{corner1 initial}
W_{\pm}=\mathrm{sp}\{\eta_1\pm\eta_2,\eta_3\pm k\eta_4 \}.
\end{equation} The constant \begin{equation}
k=-\frac{\omega(\eta_2,\eta_3)}{\omega(\eta_1,\eta_4)}=\frac{\sqrt{2}}{3-2a}>0
\end{equation} is needed to ensure that the planes $W_{\pm}$ are Lagrangian. This restriction is very beneficial; were we looking for the same connections in the full Grassmannian, then there would be a two-dimensional set of orbits indexed by $k(\neq 0)$. Now, to prove that there is no contribution to the Maslov index near the corner, it suffices show that the trajectory through $W_{\pm}$ is disjoint from $\Sigma(V^s(0))$. Since the ``boundary data'' for this equation are given in terms of the basis of eigenvectors at $u=1$, the easiest way to describe the solution is to use this basis for the Pl\"{u}cker coordinates as well. (See Appendix A for a discussion of Pl\"{u}cker coordinates.) The drawback is that the reference plane $V^s(0)$ must be rewritten in terms of this new basis, which can be done with the help of Maple: \begin{equation}\label{stable subspace corner1 basis}
\begin{aligned}
V^s(0) & =\mathrm{sp}\left\{\nu_1,\nu_2 \right\},\\
\nu_1 & = -2\eta_1+2c(2a-3)\eta_3+(2a-1)(a-1)\eta_4  \\
\nu_2 & = 2(2a-1)\eta_1+a(3-2a)\eta_2+\frac{(2a-1)(2a-3)}{-c}\eta_3+(1-2a)\eta_4.
\end{aligned}
\end{equation}

The heteroclinic orbit in $\Lambda(2)$ through $W_{\pm}$ is the projectivized version of the solution to the equation induced by (\ref{corner1 linear problem}) on $\bigwedge^2(\bbR^4)$ with initial condition \begin{equation}\label{corner1 Plucker initial}
\tilde{W}_\pm=(\eta_1\pm\eta_2)\wedge(\eta_3\pm k\eta_4)=(0,1,\pm k,\pm 1,k,0).
\end{equation} The ordered 6-tuple in (\ref{corner1 Plucker initial}) gives the Pl\"{u}cker coordinates of $W_\pm$ in the new basis. We can now give the explicit solution through this point, since the $\eta_i$ are eigenvectors for the matrix in (\ref{corner1 linear problem}): \begin{equation}
\gamma_\pm(z)=(0,e^{(\mu_1+\mu_3)z},\pm ke^{(\mu_1+\mu_4)z},\pm e^{(\mu_2+\mu_3)z},ke^{(\mu_2+\mu_4)z},0).
\end{equation} Since these coordinates are projective, we can divide by $e^{-cz}=e^{(\mu_1+\mu_4)z}=e^{(\mu_2+\mu_3)z}$ to obtain a more tractable representation of the same planes, \begin{equation}\label{corner1 solution}
\tilde{\gamma}_\pm(z)=(0,e^{-\frac{\sqrt{2}}{2}z},\pm k,\pm 1,ke^{\frac{\sqrt{2}}{2}z},0).
\end{equation} We claim that it suffices to show that $\tilde{\gamma}_+$ does not cross $\Sigma(V^s(0))$. Indeed, consider the concatenated curve $\gamma_0:=\tilde{\gamma}_+*-\tilde{\gamma}_-$, which is a loop in $\Lambda(2)$. It is known that the Maslov index of a loop does not depend on the choice of reference plane, since the Maslov index can be interpreted as an element in the cohomology group $H^1(\Lambda(2),\bbZ)$ \cite{Arnold67,Arnold85,Duis76}. Taking the reference plane to be $V=\mathrm{sp}\{\eta_3,\eta_4\}=(0,0,0,0,0,1)$, it follows from (\ref{Plucker xing condition}) that crossings are given by the equation $p_{12}=0$. For $\gamma_0(z)$, $p_{12}\equiv 0$, so $\gamma_0(z)$ is entirely contained in $\Sigma(V)$. However, the plane $V$ itself is not in the image of $\gamma_0$, which means that $\dim(\gamma_0(z)\cap V)\equiv 1$. It then follows from Theorem 2.3 (Zero) of \cite{RS93} that the Maslov index of $\gamma_0$ is $0$. Since the Maslov index is additive by concatenation (Theorem 2.3 (Catenation) of \cite{RS93}), it follows that the Maslov indices of $\gamma_+$ and $\gamma_-$ with respect to any reference plane are opposite of each other. We now show that $\gamma_+$ has no crossings with $\Sigma(V^s(0))$, which proves that there is no contribution to the Maslov index at this corner, regardless of which path is taken.

Let $(p_{ij})$ be the Pl\"{u}cker coordinates of $V^s(0)$. From (\ref{Plucker xing condition}), we see that $z^*$ is a conjugate time if and only if  \begin{equation}\label{corner1 conjugate point finder}
0=-e^{-z^*\sqrt{2}/2}p_{24}+kp_{23}+p_{14}-ke^{z^*\sqrt{2}/2}p_{13}.
\end{equation}
To prove that the expression in (\ref{corner1 conjugate point finder}) never vanishes, we first calculate using (\ref{stable subspace corner1 basis}) and (\ref{Plucker coords}) that \begin{equation}
\begin{aligned}
& -p_{24}  = a(1-2a)(3-2a)(1-a)>0\\
& kp_{23}=p_{14} =-2a(1-2a)(3-2a)<0\\
& -kp_{13} = 16a(1-a)>0.
\end{aligned}
\end{equation} As a function of $z$, the right-hand side of (\ref{corner1 conjugate point finder}) can therefore be written as \begin{equation}
h(z):=Ae^{-z\sqrt{2}/2}-B+Ce^{z\sqrt{2}/2},
\end{equation} with $A,B,C>0$. It is clear that $h(z)>0$ for $|z|$ sufficiently large. Furthermore, $h$ has a single local minimum at $z=\ln(A/C)/\sqrt{2}$, at which point $h(z)=2\sqrt{AC}-B$. To show that there are no conjugate points for $\gamma_+$, it therefore suffices to show that $2\sqrt{AC}-B>0$. We compute \begin{equation}
\begin{aligned}
2\sqrt{AC}-B & = 8a(1-a)\sqrt{(1-2a)(3-2a)}-4a(1-2a)(3-2a)\\
& = 4a\left(2(1-a)\sqrt{(1-2a)(3-2a)}-(1-2a)(3-2a)\right)\\
& = 4a\sqrt{(1-2a)(3-2a)}\left(\sqrt{4(1-a)^2}-\sqrt{(1-2a)(3-2a)}\right)\\
&=4a\sqrt{(1-2a)(3-2a)}\left(\frac{1}{\sqrt{4(1-a)^2}+\sqrt{(1-2a)(3-2a)}}\right)>0,
\end{aligned}
\end{equation} as desired. This proves that the connecting orbit $\gamma_+$ from $X_{13}$ to $X_{24}$ has no conjugate points, and by the argument above the same is true of $\gamma_-$. We thus see that there is no contribution to the Maslov index in the corner near $p$.

\subsection{Passage Near $M_\eps^R$}
We now consider the tangent space to $W^u(0)$ as it moves by $M_\eps^R$. Since $M_0^R$ is one-dimensional, it will be helpful to think of the curve $T_\varphi W^u(0)$ as being parametrized by $v$ (and sometimes $u$). As noted in previous sections, $\varphi'(z)\in E^u(0,z)$ is tangent to leading order to $T_\varphi(z) M^R_\eps$ for this part of the journey. Due to its being crushed against $W^u(M_\eps^R)$, the other vector spanning $T_\varphi W^u(0)$ will be in an unstable direction, which must be $\eta_4=\eta_4(v)$ by the symplectic considerations. As was the case for the fast jumps, we are free to take $\eps=0$ due to the robustness of transverse crossings. This time, the limit $\eps\rightarrow 0$ is the singular limit on the slow timescale. At any point $P=(u,v,0,y)$ on $M_0^R$, the shooting manifold therefore has tangent space \begin{equation}
T_P W^u(0)=\mathrm{sp}\left\{\left[\begin{array}{c}
1\\
f'(u)\\
0\\
\frac{1}{c}(\gamma f'(u)-1)
\end{array}\right],\left[\begin{array}{c}
f'(u)\\
0\\
f'(u)\mu_4(u)\\
\mu_4(u)
\end{array} \right] \right\}.
\end{equation}
In this section, we must be more careful about the reference plane. The cubic is symmetric about its inflection point, meaning that \begin{equation}\label{cubc symmetry}
f'(1/3(a+1)+u)=f'(1/3(a+1)-u).
\end{equation} In particular, this implies that $f'(0)=f'(u^*)$, and hence the linearization of (\ref{FHN layer problem}) at the two jump-off points $0$ and $q$ has the same set of eigenvectors and eigenvalues. This is problematic, since $\vp'(z)$ approaches $q$ in the direction $\eta_2(q)$, which we now see is in the subspace $V^s(0)$. Moreover, we cannot use any perturbation arguments at this point, since there is another non-smooth (in the limit) reorientation at $q$ to prepare for the jump back to $M_0^L$. To sidestep this issue, we simply use the reference plane (\ref{ref plane}), with $\tau$ chosen so that $\vp(\tau)$ is on the slow manifold $M_0^L$, but not at $0$ or the landing point $\hat{q}$. We will see that this slides the conjugate point down $M_\eps^R$ to a point safely away from either corner. Now, it is clear that a crossing occurs at a point $(u,v,w,y)$ if and only if \begin{equation}\label{M_R conjugate point}
\det\left[\begin{array}{c c c c}
f'(u_\tau) & 1 & 1 & f'(u)\\
0 & f'(u_\tau) & f'(u) & 0\\
f'(u_\tau)\mu_1(u_\tau) & 0 & 0 & f'(u)\mu_4(u)\\
\mu_1(u_\tau) & \frac{1}{c}(\gamma f'(u_\tau)-1) & \frac{1}{c}(\gamma f'(u)-1) & \mu_4(u)
\end{array}\right]= 0.
\end{equation} For sure, the expression in (\ref{M_R conjugate point}) vanishes at least once. Indeed, $u$ ranges from $1$ to $u^*=2/3(a+1)$ on $M_0^R$, so since $2/3(a-1/2)<u_\tau<0$, it follows from (\ref{cubc symmetry}) that $u$ must attain the unique value $u_*$ such that $f'(u_*)=f'(u_\tau)$. At this point (call it $\vp(z_*))$, $\vp'(z_*)= T_{\vp(z_*)}M^R_0$ is parallel to $\eta_2(u_\tau)\in E^u(0,\tau)$, which means that $z_*$ is a conjugate point. At any other point on $M_0^R$, a tedious (but routine) calculation of the determinant in (\ref{M_R conjugate point}) reveals that it does not vanish, hence there are no other other conjugate points on this segment.

To calculate the contribution to the Maslov index, we need the dimension and direction of the single crossing, which occurs at the point $P_*:=\vp(z_*)=(u_*,v_*,w_*,y_*)$ and time $z=z_*$. Since $f'(u_\tau)=f'(u_*)$ but $\mu_1(u_\tau)\neq\mu_4(u_*)$, it is clear from (\ref{M_R conjugate point}) that the intersection $E^s(0,\tau)\cap T_{P_*}W^u(0)$ is one-dimensional, spanned by $\eta_2(u_\tau)=\eta_2(u_*)$, the velocity of $\varphi$. For the direction of the crossing, observe that (\ref{xing form defn}) evaluated on the velocity $\varphi'$ at a conjugate time $z^*$ can be rewritten \begin{equation}
\omega(\varphi',\frac{d}{dz}\varphi')|_{z=z_*}=\eps\omega(\varphi',\frac{d}{d\zeta}\varphi')|_{\zeta=\eps z_*}.
\end{equation} Since we only care about the sign of this expression, we can ignore the $\eps$ in front. Furthermore, $\varphi'\approx\eta_2(v)$ along $M_\eps^R$, and $v$ increases as $\zeta$ increases for the reduced flow, so it follows that \begin{equation}
\mathrm{sign }\,\Gamma(E^u,E^s(0,\tau);z_*)(\varphi'(z_*))=\mathrm{sign }\,\omega(\eta_2(v),\partial_v\eta_2(v))|_{v=v_*}=\frac{g''(v_*)}{c}>0.
\end{equation} In the above calculation, $g=f^{-1}$, so $g''(v_*)=-f''(u_*)/(f'(u_*))^3<0$. This shows that the crossing near the slow manifold contributes $+1$ to the Maslov index, so it offsets the crossing in the opposite direction along the fast jump.

\subsection{Second Corner}
As the slow flow carries $\varphi$ up $M_\eps^R$, it approaches the jump off point $q$, which is the scene of another abrupt reorientation of $W^u(0)$. At the bottom right corner, we saw that there was no contribution to the Maslov index, irrespective of which of the two possible paths $T_{\varphi(z)}W^u(0)$ took to get to its starting position for the slow flow. Unfortunately, we will not be so lucky at the right jump-off point.

First, let us determine the correct ``boundary conditions'' for the corner problem. From the previous section, we know that $T_{\varphi(z)}W^u(0)$ will be $O(\eps)$ close to $X_{24}$ as $\vp(z)$ approaches $q$. In this subsection, the notation $X_{ij}$ refers to the plane $\mathrm{sp}\{\eta_i(q),\eta_j(q)\}$ spanned by eigenvectors of the linearization (\ref{linearization around wave}) evaluated at $q$, for which $u=u^*$ and $f'(u^*)=-a$. The exit position of $\varphi$ along the back can be determined by using the singular solution; as was the case for the front, the wave will be launched from $q$ in the weak unstable direction, $\eta_3(u^*)$. We argue that the second direction present is the most unstable eigenvector $\eta_4(u^*)$. Indeed, the tangent vectors to $W^u(0)$ solve (\ref{linearization around wave}), which is essentially autonomous in the neighborhood of $q$. We know that the initial condition will be $O(\eps)$ close to $\eta_4$, and therefore this direction must dominate near the corner, since it is the direction of most rapid growth for the autonomous system. It follows that we are searching for a heteroclinic connection from $X_{24}$ to $X_{34}$.

From Appendix B, we know that $W^u(X_{24})$ is one-dimensional and $X_{34}$ is a global attractor, so the only way to move from one point to the other in $\Lambda(2)$ is to exchange $\eta_2$ for $\eta_3$ in the basis for $T_{\varphi(z)}W^u(0)$. There are again two orbits that make this connection, $\gamma_+$ through $\mathrm{sp}\{\eta_2+\eta_3,\eta_4\}$ and $\gamma_-$ through $\mathrm{sp}\{\eta_2-\eta_3,\eta_4\}$. As before, it is easy to find the solutions for the equation induced on $\Lambda(2)$ by the constant coefficient system \begin{equation}\label{corner2 linear problem}
\left(\begin{array}{c}
p\\
q\\
r\\
s
\end{array}\right)'=\left(\begin{array}{c c c c }
0 & 0 & 1 & 0\\
0 & 0 & 0 & 0\\
a & 1 & -c & 0\\
-1 & \gamma & 0 & -c
\end{array}\right)\left(\begin{array}{c}
p\\q\\r\\s
\end{array}\right)
\end{equation} pinned at the corner $q$. In Pl\"{u}cker coordinates, these two paths are \begin{equation}\label{corner2 links}
\gamma_\pm(z)=(0,0,0,0,1,\pm e^{(\mu_3-\mu_2)z}).
\end{equation}
This time the concatenated path $\gamma_0=\gamma_+*-\gamma_-$ has Maslov index $1$. Indeed, one can re-parametrize $\gamma_0$ to see that it has the same homotopy class as \begin{equation}
\tilde{\gamma}_0=\begin{cases}
(0,0,0,0,1-t,t) & t\in[0,1]\\
(0,0,0,0,t-1,2-t) & t\in[1,2].
\end{cases}
\end{equation} (This is a loop since the coordinates are homogeneous.) We are now free to use any reference plane to compute the Maslov index, so we choose the convenient subspace $V=\mathrm{sp}\{\eta_1,\eta_3 \}$. The train of $V$ is given by $p_{24}=0$, so one sees that there is a unique conjugate point for $\tilde{\gamma}_0$ at $t=1$, with $\eta_3$ spanning the intersection. It is not difficult to see that this crossing is regular, so it contributes $\pm 1$ to the Maslov index. (The sign is not important.) This is the only crossing, so by homotopy invariance of the Maslov index (Theorem 2.3 (Homotopy) of \cite{RS93}), it follows that the Maslov index of ${\gamma}_0$ is $\pm 1$. Thus the Maslov indicies of $\gamma_+$ and $\gamma_-$--which are integers summing to $\pm 1$--must be different.

\begin{figure}[h]
	\centering
	\begin{subfigure}{0.49\linewidth} \centering
		\includegraphics[scale=0.75]{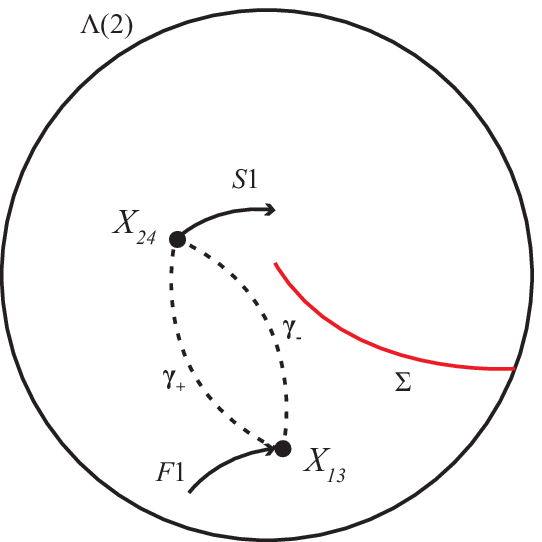}
		\caption{Corner near $p$}
	\end{subfigure}
	\begin{subfigure}{0.49\linewidth} \centering
		\includegraphics[scale=0.75]{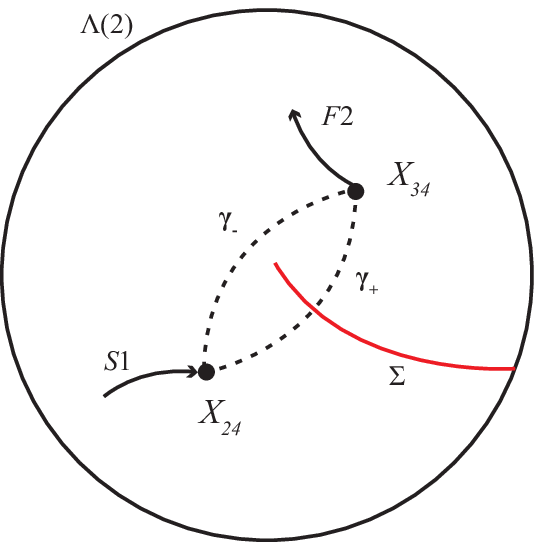}
		\caption{Corner near $q$}
	\end{subfigure}
	\caption{Schematic of corners where transitions occur between fast and slow dynamics.}
\end{figure}

We actually make the stronger claim that one of the indices is $0$ and the other is $\pm 1$. To see this, recall from (\ref{detection form}) that we detect crossings by evaluating a fixed one-form on (\ref{corner2 links}). Doing so yields a monotone function in $z$, which can have only $0$ or $1$ zeros. It therefore suffices to check the sign of this one-form at the endpoints (i.e. $|z|\gg 1$) of the correct curve. Before doing so, we must determine which of $\gamma_+$ and $\gamma_-$ is traversed to connect the two states.

The scalings of $\eta_2$ and $\eta_3$ are important for distinguishing the paths $\gamma_+(z)$ and $\gamma_-(z)$, so we fix the basis vectors \begin{equation}\label{corner2 evecs}
\eta_2=\left[\begin{array}{c}
-1/a\\1\\0\\\frac{1}{c}\left(\gamma+\frac{1}{a}\right)
\end{array} \right],\hspace{.05 in}\eta_3=\left[\begin{array}{c}
0\\0\\0\\1
\end{array}\right],\hspace{.05 in}\eta_4=\left[\begin{array}{c}
-a\\0\\-a\frac{\sqrt{2}}{2}\\\frac{\sqrt{2}}{2}
\end{array}\right].
\end{equation}
As explained above, one tangent direction to the shooting manifold is $\eta_4$, which will not move in the limit, since it is an eigenvector of (\ref{corner2 linear problem}). It is therefore evident that the trajectory in $\Lambda(2)$ is driven by the change in the velocity $\varphi'$. To see which of the paths $\gamma_\pm$ is taken, we must know the sign of the multiple of $\eta_2$ (resp. $\eta_3$) that $\varphi'$ is upon entrance to (resp. exit from) a neighborhood of $q$. The entrance is clear from the slow flow (\ref{slow flow}); to leading order, $v$ is increasing, $u$ is decreasing, $w\approx 0$ and $y$ is decreasing, hence $\varphi'$ is a positive multiple of $\eta_2$, comparing with (\ref{corner2 evecs}).

To see the orientation at exit, set $\tilde{y}=y-\frac{1}{c}(\gamma v^*-u^*)$. Along the back, $y$ goes from $\frac{1}{c}(\gamma v^*-u^*)$ at $q\in M_0^R$ to $\frac{1}{c}(\gamma v^*-(u^*-1))$ at $\hat{q}\in M_0^L$, so $\bar{y}$ goes from $0$ to $-1/c$. Furthermore, we compute that $\bar{y}$ satisfies \begin{equation}
\tilde{y}'=-y'=cy-\gamma v^*+u=-c\bar{y}-(u^*-u)=-c\bar{y}-u_f,
\end{equation} where $u_f$ is the equation for $u$ on the front, as in (\ref{McKean soln}). This is the same equation and boundary conditions satisfied by $y$ along the front, so we have \begin{equation}\label{y bar soln}
\tilde{y}(z)=Ke^{-cz}+e^{-cz}\int\limits_{-\infty}^{z}e^{cs}u_f(s)\,ds,
\end{equation} where $K$ is given by (\ref{y soln constant}). Notice that $K$ is positive, so \begin{equation}
\lim\limits_{z\rightarrow-\infty}e^{cz}y'(z)=-\lim\limits_{z\rightarrow-\infty}e^{cz}\tilde{y}'(z)=cK<0,
\end{equation} using (\ref{y bar soln}). As along the front, $u$ and $w$ still decay faster than $y$ at $-\infty$, so it follows that $\varphi'$ leaves $q$ along the back in the direction $cK\eta_3$. This proves that the connecting orbit in $\Lambda(2)$ from $X_{24}$ to $X_{34}$ is $\gamma_-$.

To determine the contribution to the Maslov index, it therefore suffices to compare the signs of $\det\left[E^s(0,\tau),\eta_2,\eta_4 \right]$ and $\det\left[E^s(0,\tau),-\eta_3,\eta_4 \right].$ Representing $E^s(0,\tau)$ in the basis (\ref{ref plane}), a calculation gives that \begin{equation}\label{corner2 in beta}
\det\left[E^s(0,\tau),\eta_2,\eta_4 \right] = \frac{\delta(\delta a\sqrt{2}+2Qa^2-\delta\mu_1(u_\tau))}{-2ac},
\end{equation} 
where $\delta=-f'(u_\tau)-a>0$ and $Q=\sqrt{2}/2-\mu_1(u_\tau)>0$. The introduction of these variables simplifies the calculation because $f'(u_\tau)$ approaches $-a$ from above as $u_\tau\rightarrow0$. It is thus clear that the determinant in (\ref{corner2 in beta}) is positive. Similarly, we compute that \begin{equation}
\det\left[E^s(0,\tau),-\eta_3,\eta_4 \right]=\frac{(f'(u_\tau))^2a(\sqrt{2}-2\mu_1(u_\tau))}{2}>0.
\end{equation} Since the detection form is monotone in $z$ on $\gamma_-(z)$, the fact that it has no changes in sign implies that it has no zeros, and therefore there are no conjugate points near $q$. To recap, the cumulative Maslov index as we enter the back is 0: $-1$ from the front $+1$ near the right slow manifold.
\subsection{Second Fast Jump}
The analysis of the back is nearly identical to that of the front, so we will skip many of the details. Along the back, $W^u(0)$ is $O(\eps)$ close to $W^u(q)$, the cylinder over the Nagumo back. As a remark, the full power of the Exchange Lemma is not needed to see this--we are not carrying any extra center/slow directions in the Maslov index calculation. We are once again free to consider intersections of $T_{q_b(z)}W^u(q)$ with the train of $V^s(0)$, since $V^s(0)$ is transverse to the tangent space to the cylinder near $q$ and $\hat{q}$. Recycling the notation $u_f(z)$ from the front, we have $u_b=u^*-u_f$, hence $w_b= -w_f$. We can again solve for $w$ as a function of $u$ to obtain \begin{equation}\label{back profile}
w(u)=-\frac{\sqrt{2}}{2}(u^*-u)(1-(u^*-u)),
\end{equation} where now $u$ ranges from $u^*$ to $u^*-1$. This yields the basis \begin{equation}
\mathrm{sp}\left\{\left[\begin{array}{c}
1\\0\\\frac{\sqrt{2}}{2}-\sqrt{2}(u^*-u)\\0
\end{array}\right],\left[\begin{array}{c}
0\\0\\0\\1
\end{array}\right]\right\}
\end{equation} of $T_{q_b(z)}W^u(q)$. Comparing with (\ref{stable subspace basis}), we see that there is a unique conjugate point, which is the value $z^*$ such that $(u^*-u)=\displaystyle\frac{1}{2}+a.$ The intersection $V^s(0)\cap T_{q_b(z)}W^u(q)$ is again spanned by $\xi=\{1,0,-a\sqrt{2},\sqrt{2} \}$. Since $f'(u^*-1/2-a)=f'(1/2+a)$ by (\ref{cubc symmetry}), the crossing form calculation is identical to (\ref{F1 crossing form}). Explicitly, we have \begin{equation}\label{F2 crossing form}
\begin{aligned}
\omega(\xi,A(0,z^*)\xi) & =-f'\left(u^*-\left(\frac{1}{2}+a\right)\right)+ca\sqrt{2}-2a^2\\
 & = -f'\left(\frac{1}{2}+a\right)+ca\sqrt{2}-2a^2\\
 & = a^2-\frac{1}{4}<0.
\end{aligned}
\end{equation} Thus the Maslov index of the second fast jump is $-1$. 

\subsection{Final Corner, Passage near $M_\eps^L$, and Return to Equilibrium}
The analysis of the corner $\hat{q}$ is identical to that of $p$. First, the symmetry of $f$ ensures that the set of eigenvectors and eigenvalues for the system (\ref{linearization around wave}) evaluated at $p$ and at $\hat{q}$ are the same when $\eps=0$. Also, the tangent space of $W^u(0)$ is $O(\eps)$ close to $X_{13}$ upon entrance into a neighborhood of both points. Finally, Deng's Lemma and the already-proved existence of the wave necessitate that $T_{p_\mathrm{out}}W^u(0)$ and $T_{\hat{q}_\mathrm{out}}W^u(0)$ are both $O(\eps)$ close to $X_{24}$. Since there are only two possible paths of (Lagrangian) planes connecting $X_{13}$ and $X_{24}$--neither of which has any conjugate points--there is no need to investigate the corner $\hat{q}$ further. We therefore turn our attention to the slow return to equilibrium.

As for $M_\eps^R$, we expect one conjugate point for the final slow piece. This one is actually easier to find; by definition of $\Maslov$, there is a conjugate point at $z=\tau$, for which value of $z$ we have \begin{equation}
\mathrm{sp}\{\vp'(\tau)\}=E^u(0,\tau)\cap E^s(0,\tau).
\end{equation} The fact that $\vp$ is transversely constructed implies that the intersection is only one-dimensional. In terms of the singular orbit, we see that the intersection is spanned by the tangent vector to $T_{\vp(\tau)}M_\eps^L$. The non-existence of any other conjugate points is identical to \S 4.3--one simply shows that determinant which detects conjugate points does not vanish unless $u=u_\tau$.

The sign of this crossing is computed as in \S 5.3. This time, we have \begin{equation}
\mathrm{sign}\,\Gamma(E^u(0,\cdot),E^s(0,\tau);\tau)(\varphi'(\tau))=-\mathrm{sign}\,\omega\left(\eta_2(v),\partial_v\eta_2(v) \right)|_{v=v_\tau},
\end{equation} since $v$ decreases as $\zeta=\eps z$ increases on $M_0^L$. Once again defining $g(v)=f^{-1}(v)$--this time on the left branch of $M_0$--one computes from (\ref{omega defn}) that \begin{equation}
\omega\left(\eta_2(v),\partial_v\eta_2(v) \right)|_{v=v_\tau}=\frac{g''(v)}{c}<0,
\end{equation} where $g''(v)=-f''(u_\tau)/(f'(u_\tau))^3>0.$ Hence the crossing is positive, as it was for the conjugate point on $M_\eps^R$. Since this crossing occurs at the right endpoint of the curve $E^u(0,z)$, the contribution to the Maslov index is $+1$, by Definition \ref{Maslov defn}.
\subsection{Concluding Remarks}
Adding up the Maslov index of the constituent pieces, we see that \begin{equation}\label{Maslov scorecard}
\Maslov=-1+1-1+1=0.
\end{equation} This proves Theorem \ref{Morse = Maslov thm}, and we conclude that the fast traveling pulses for (\ref{general PDE}) are nonlinearly stable. Although the profiles and speeds of the waves in (\ref{general PDE}) and those in the same equation without diffusion on $v$ are very similar, we point out that the stability proofs are entirely different and independent of each other. In \cite{Jones84,Yan85}, the stability result is obtained by showing that the eigenvalues of the linearized operator are close to those for the reduced systems corresponding to the fast front and back. Conversely, the eigenvalue problem for $L$ in (\ref{linearized operator}) is analyzed entirely as an operator on $BU(\bbR,\bbR^2)$. Thus the smallness of $\eps$ in each setting appears in different ways. In \cite{corn}, it used to achieve monotonicity for the Maslov index in the spectral parameter, as well to prove that the unstable spectrum of $L$ must be real. Most notably, the small parameter allows us to calculate $\Maslov$ using geometric singular perturbation theory.  

\appendix
\section{Pl\"{u}cker Coordinates and the Detection Form}

The Maslov index is defined for the unstable bundle $E^u(0,z)$, so it is important to know how this solution space evolves. It is standard that (\ref{linearization around wave}) induces a flow on $\Gr_2(\bbR^4)$, and the easiest way to analyze this is equation is via the Pl\"{u}cker coordinates. For more background on the results contained in this section, the reader is referred to \S 3 of \cite{CJ17} and also \cite{CDB09}. Let $\{e_i\}_{i=1}^4$ be any basis of $\bbR^4$. This induces a basis $\{e_i\wedge e_j\}$ of $\bigwedge^2(\bbR^4)$. Any linear system \begin{equation}\label{gen linear system}
Y'(z)=B(z)Y(z)
\end{equation} induces an equation on $\bigwedge^2(\bbR^4)$ by the formula \begin{equation}\label{induced eqn}
\frac{d}{dz}(v_1(z)\wedge v_2(z))=B(z)v_1(z)\wedge v_2(z)+v_1(z)\wedge B(z)v_2(z).
\end{equation} To relate (\ref{induced eqn}) to the dynamics of (\ref{gen linear system}), one can use the Pl\"{u}cker embedding to realize two-dimensional subspaces of $\bbR^4$ as elements of $\bigwedge^2(\bbR^4)$. More precisely, the map \begin{equation}
\begin{aligned}
j:\Gr_2(\bbR^4) & \rightarrow \mathbb{P}({\smash\bigwedge^2} \bbR^4)\\
V=\mathrm{sp}\{u,v\} & \mapsto[u\wedge v]
\end{aligned}
\end{equation} is a well-defined embedding. (See \cite{Hassett}.) Using the definition of the wedge product, one sees that for $u=\sum u_ie_i$ and $v=\sum v_ie_i$, we have coordinates \begin{equation}\label{Plucker coords}
p_{ij}=\left|\begin{array}{c c}
u_i & v_i\\
u_j & v_j
\end{array}\right|
\end{equation} for the plane $V=\mathrm{sp}\{u,v\}$. These are called the Pl\"{u}cker coordinates of $V$, and they are homogeneous (i.e. projective) because choosing a different basis of $V$ would change the $p_{ij}$ by a constant, nonzero multiple. Using (\ref{induced eqn}) (which amounts to the product rule on the $p_{ij}$) one can write down a differential equation for the Pl\"{u}cker coordinates. For example, this is done for (\ref{linearization around wave}) in \cite{CJ17}.

The Pl\"{u}cker coordinates are useful for finding conjugate points for a curve of subspaces, which are intersections between the curve and a fixed subspace. Let $V=\mathrm{sp}\{v_1,v_2\},W=\mathrm{sp}\{w_1,w_2\}\in\Lambda(2)$ be Lagrangian planes with Pl\"{u}cker coordinates $(p_{ij})$ and $(q_{ij})$ respectively. Then \begin{equation}\label{Plucker xing condition}
\begin{aligned}
W\cap V\neq\{0\}&\iff\det[v_1,v_2,w_1,w_2] = 0\\
&=p_{12}q_{34}-p_{13}q_{24}+p_{14}q_{23}+p_{23}q_{14}-p_{24}q_{13}+p_{34}q_{12},
\end{aligned}
\end{equation} using (\ref{Plucker coords}) and cofactor expansion to the compute the determinant. Thus if one wishes to find \emph{all} conjugate points for a curve $W(z)$ of Lagrangian subspaces with respect to a reference plane $V$, then the function \begin{equation}\label{detection form}
\beta(z)=\det[V,W(z)]
\end{equation} is a linear function of the Pl\"{u}cker coordinates of $W$ whose zeros correspond to conjugate points. In \cite{CJ17}, this function is called the \emph{detection form}.

Finally, it is sometimes convenient to change the basis of $\bbR^4$ before computing the Pl\"{u}cker coordinates. For example, suppose \begin{equation}\label{constant coeff system}
Y'(z)=BY(z)
\end{equation} is a constant coefficient, linear system on $\bbR^4$. If $(\mu_i,\eta_i)$ are eigenvalue, eigenvector pairs for $B$ (assume that all eigenvalues have full geometric multiplicity), then the solution to (\ref{constant coeff system}) through $\eta_i$ is given by $e^{\mu_iz}\eta_i$. It then follows from (\ref{induced eqn}) that the solution to the equation induced by (\ref{constant coeff system}) on $\Gr_2(\bbR^4)$ through $\mathrm{sp}\{\eta_i,\eta_j\}$ is given by \begin{equation}
e^{Bz}\eta_i\wedge\eta_j+\eta_i\wedge e^{Bz}\eta_j=e^{\mu_iz}\eta_i\wedge\eta_j+\eta_i\wedge(e^{\mu_jz}\eta_j)=e^{(\mu_i+\mu_j)z}\eta_i\wedge\eta_j,
\end{equation} using the linearity of $\wedge$. In Pl\"{u}cker coordinates, this is given by \begin{equation}
p_{kl}=\begin{cases}
e^{(\mu_i+\mu_j)z} & (k,l)=(i,j)\\
0 & \text{else}
\end{cases}.
\end{equation} Since these coordinates are projective, this means that the solution is constant. This makes sense, because the eigenspaces of $B$ are invariant under (\ref{constant coeff system}).
\section{Phase Portrait of Induced Flow on $\Lambda(2)$}
The connection in $\Lambda(2)$ between the fast and slow dynamics is determined in the $\eps=0$ limit by the constant coefficient system obtained by linearizing about the relevant corner point. The phase portrait of such systems is described completely in \cite{Shayman86} and is of interest in control theory. Here we catalog the relevant results for this work, tailored to the linearization of the traveling wave ODE (\ref{FHN traveling wave ODE}) at any point on $M_0^{R/L}$. The reader should be aware that the presentation in \cite{Shayman86} assumes that the flow on $\Lambda(n)$ is given by the action of a $2n\times2n$ symplectic matrix on Lagrangian subspaces. This is not the case here, since the solution operator for (\ref{linearization around wave}) is not symplectic. However, this does not change the geometry of the flow on $\Lambda(2)$, which is an invariant manifold of the system on $\mathrm{Gr}_2(\bbR^4)$. It is therefore clear that the following facts remain true, although the assumptions of the corresponding theorems in \cite{Shayman86} sometimes require modification.

To fix some notation, first recall that there are three ``corners" at which transitions from fast-to-slow dynamics (or vice-versa) occur: $p=(1,0,0,-1/c^*)$, $q=(u^*,v^*,0,(1/c^*)(\gamma v^*-u^*))$, and $\hat{q}=(u^*-1,v^*,0,(1/c^*)(\gamma v^*-u^*+1)).$ From (\ref{evals of lin}), we know that there are four eigenvalues of the linearization at each point, which satisfy (when $\eps=0$) $\mu_1<\mu_2=0<\mu_3=-c<\mu_4$. $\mu_1$ and $\mu_4$ depend on $u$, but their sum is always equal to $-c$. Now consider (\ref{linearization around wave}), except for fixed $u$. This system is then of the form (\ref{constant coeff system}), and it induces a flow on $\mathrm{Gr}_2(\bbR^4)$. Explicitly, the trajectory through any plane $V\in\mathrm{Gr}_2(\bbR^4)$ is given by $\exp(Bz)\cdot V$. Thus a subspace is an equilibrium for (\ref{constant coeff system}) if and only if it is $B$-invariant. There are six such fixed points, given by \begin{equation}
X_{ij}=\mathrm{sp}\{\eta_i,\eta_j\}, \hspace{.1 in}\{i,j\}\in{\{1,2,3,4\} \choose 2}.
\end{equation} Of these, $X_{12},X_{13},X_{24}$ and $X_{34}$ are Lagrangian planes, making them the points of interest. The following theorem holds for the flow on $\Lambda(2)$ induced by the constant coefficient system (\ref{constant coeff system}) based at each corner point mentioned above. Recall that $\dim\Lambda(2)=3$. We refer the reader to \cite{Shayman86} for proofs.

\begin{theorem}[Shayman \cite{Shayman86}]
For the equation induced by (\ref{constant coeff system}) on $\Lambda(2)$, the following are true:
\begin{enumerate}
\item Each fixed point for (\ref{constant coeff system}) is hyperbolic. We have \begin{equation}
\begin{aligned}
\dim W^u(X_{12}) &=\dim W^s(X_{34})=3\\ \dim W^u(X_{13}) &=\dim W^s(X_{24})=2\\ \dim W^u(X_{24}) &=\dim W^s(X_{13})=1.
\end{aligned}
\end{equation} Furthermore, each of $W^u(X_{12})$ and $W^s(X_{34})$ is open and dense in $\Lambda(2)$.
\vspace{.1 in}
\item $\Lambda(n)=\bigcup W^u(X_{ij})=\bigcup W^s(X_{ij})$, due to the fact that $\Lambda(2)$ is compact.
\vspace{.1 in}
\item Each $W^{u/s}(X_{ij})$ is a Schubert cell. In particular, it is diffeomorphic to $\bbR^d$, where $d$ is the dimension of the invariant manifold.
\item For any $i,j,i',j'$, either $W^u(X_{ij})\cap W^s(X_{i'j'})=\phi$ or $W^u(X_{ij})\pitchfork W^s(X_{i'j'})$.
\end{enumerate}
\end{theorem}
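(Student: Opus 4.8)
The plan is to diagonalize $B$, pass to Plücker coordinates adapted to its eigenbasis, and read off all four statements from the resulting elementary flow; the role of \cite{Shayman86} is then only to certify that the Lagrangian constraint does not spoil the picture. By (\ref{evals of lin}) the eigenvalues of $B$ at each corner are real, simple, and ordered $\mu_1<\mu_2<\mu_3<\mu_4$ with $\mu_1+\mu_4=\mu_2+\mu_3=-c$; fix eigenvectors $\eta_1,\dots,\eta_4$. From Theorem \ref{thm Lambda n invariant}, $\omega(e^{Bz}a,e^{Bz}b)=e^{-cz}\omega(a,b)$, hence $e^{(\mu_i+\mu_j)z}\omega(\eta_i,\eta_j)=e^{-cz}\omega(\eta_i,\eta_j)$ for all $z$, so $\omega(\eta_i,\eta_j)=0$ unless $\{i,j\}\in\{\{1,4\},\{2,3\}\}$, and nondegeneracy of $\omega$ forces $\omega(\eta_1,\eta_4)\ne0$, $\omega(\eta_2,\eta_3)\ne0$. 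Thus the invariant Lagrangian $2$-planes are exactly $X_{12},X_{13},X_{24},X_{34}$ (the planes $X_{14},X_{23}$ are not isotropic), and $\Lambda(2)\subset\bbP(\bigwedge^2\bbR^4)$ is the intersection of the Plücker quadric $p_{12}p_{34}-p_{13}p_{24}+p_{14}p_{23}=0$ with the single hyperplane $p_{14}=\kappa p_{23}$, where $\kappa:=-\omega(\eta_2,\eta_3)/\omega(\eta_1,\eta_4)\ne0$. By Appendix A the induced flow on $\Gr_2(\bbR^4)$ in these coordinates is $p_{ij}(z)=e^{(\mu_i+\mu_j)z}p_{ij}(0)$, with exponents strictly ordered except for the tie $\mu_1+\mu_4=\mu_2+\mu_3$.

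For statements (1)--(3) I would argue as follows. As $z\to-\infty$ the Plücker coordinate with the smallest exponent among those initially nonzero dominates in projective space, and on $\Lambda(2)$ the relation $p_{14}=\kappa p_{23}$ together with the Plücker relation shows that $p_{12}=p_{13}=0$ already forces $p_{14}=p_{23}=0$; hence this minimum is always attained uniquely, every backward orbit converges to one of $X_{12},X_{13},X_{24},X_{34}$, and $\Lambda(2)=\bigcup W^u(X_{ij})$. Running $z\to+\infty$ and reversing the order gives $\Lambda(2)=\bigcup W^s(X_{ij})$, which is (2). Solving the quadric and the hyperplane on each locus $\{p_{kl}=0:\mu_k+\mu_l<\mu_i+\mu_j,\ p_{ij}\ne0\}$ then exhibits $W^u(X_{12}),W^u(X_{13}),W^u(X_{24}),W^u(X_{34})$ as graphs over affine spaces of dimensions $3,2,1,0$ (and the $W^s(X_{ij})$ of dimensions $0,1,2,3$), which are the Schubert cells of the complete isotropic flag $\langle\eta_1\rangle\subset X_{12}\subset\langle\eta_1,\eta_2,\eta_3\rangle$ (isotropic because its terms have $\omega$-annihilators $\langle\eta_1,\eta_2,\eta_3\rangle$, $X_{12}$, $\langle\eta_1\rangle$ respectively); this gives (3), the openness and density of $W^u(X_{12})=\{p_{12}\ne0\}$ and $W^s(X_{34})=\{p_{34}\ne0\}$, and the dimension count in (1). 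Finally the linearization of the induced flow at $X_{ij}$ has eigenvalues $\mu_k-\mu_m$ ($k\notin\{i,j\}$, $m\in\{i,j\}$) on $T_{X_{ij}}\Lambda(2)$, none vanishing since the $\mu$'s are distinct, so each $X_{ij}$ is hyperbolic, which completes (1).

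For (4) the point is that each $W^u(X_{ij})$ is a Schubert cell for the unstable eigenflag and each $W^s(X_{i'j'})$ is a Schubert cell for the opposite (stable) eigenflag, so these are opposite Schubert cells; I would check directly that $W^u(X_{ij})\cap W^s(X_{i'j'})$ is cut out inside the smooth quadric $\Lambda(2)$ by the vanishing of a set of Plücker coordinates together with $p_{ij},p_{i'j'}\ne0$, and that at any common point the conormals contributed by $W^u$ and by $W^s$ are independent — giving transversality — while if the two sets of conditions are incompatible with the quadric and the hyperplane the intersection is empty. This is exactly the content of the relevant theorem in \cite{Shayman86}.

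The computations above are bookkeeping; the only place where genuine care is needed is the coincidence $\mu_1+\mu_4=\mu_2+\mu_3$, which produces a plateau in the ordering of the Plücker exponents and hence, a priori, the possibility that a backward orbit accumulates on the fixed circle $\{[0:0:p_{14}:p_{23}:0:0]\}$ rather than on a point. The Lagrangian constraint $p_{14}=\kappa p_{23}$ removes this possibility (it forces $p_{14}=p_{23}=0$ as soon as $p_{12}=p_{13}=0$), so the plateau is invisible on $\Lambda(2)$. Alternatively, and more cheaply, one can sidestep the adaptation of \cite{Shayman86} altogether: since $\omega(e^{Bz}\cdot,e^{Bz}\cdot)=e^{-cz}\omega(\cdot,\cdot)$, the rescaled operator $e^{cz/2}e^{Bz}=e^{(B+\frac{c}{2}I)z}$ is a symplectic one-parameter group inducing the \emph{same} flow on $\Gr_2(\bbR^4)$, and $B+\frac{c}{2}I$ is infinitesimally symplectic with eigenvalues $\pm(\mu_1+\tfrac c2)$, $\pm(\mu_2+\tfrac c2)$, so Shayman's theorem applies verbatim. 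I expect this last observation, rather than any part of the Grassmannian computation, to be the main thing worth getting right.
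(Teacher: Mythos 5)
Your proposal is correct, but it takes a genuinely different route from the paper for the simple reason that the paper does not prove this theorem at all: it cites \cite{Shayman86} and merely remarks, without justification, that although Shayman's results are stated for the action of a symplectic matrix on $\Lambda(n)$ (which the solution operator $e^{Bz}$ here is not), ``this does not change the geometry of the flow.'' You supply two things the paper omits. First, a self-contained verification in eigenbasis Pl\"{u}cker coordinates: your observation that $\omega(\eta_i,\eta_j)=0$ unless $\{i,j\}\in\{\{1,4\},\{2,3\}\}$ (forced by $\Omega=e^{cz}\omega$ being invariant and $\mu_i+\mu_j\neq -c$ for the other pairs) correctly identifies the four Lagrangian fixed points and cuts $\Lambda(2)$ out of the Pl\"{u}cker quadric by the single hyperplane $p_{14}=\kappa p_{23}$; and your handling of the exponent tie $\mu_1+\mu_4=\mu_2+\mu_3$ --- showing that on $\Lambda(2)$ the conditions $p_{12}=p_{13}=0$ force $p_{14}=p_{23}=0$, so no backward orbit can accumulate on the would-be fixed circle --- is exactly the point where a naive Grassmannian argument could fail, and you close it. Second, and more valuably, the rescaling $e^{cz/2}e^{Bz}=e^{(B+\frac{c}{2}I)z}$, which induces the same flow on $\Gr_2(\bbR^4)$ but is genuinely symplectic (eigenvalues in $\pm$ pairs since $\mu_1+\mu_4=\mu_2+\mu_3=-c$), turns the paper's informal remark into a one-line reduction under which Shayman's theorem applies verbatim; this is the cleanest possible justification and is worth recording. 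The only soft spot is part (4), where your transversality-of-opposite-Schubert-cells argument is outlined rather than executed (you would need to verify the conormal independence claim at points of $W^u(X_{ij})\cap W^s(X_{i'j'})$ inside the quadric-and-hyperplane model, or simply note that under the symplectic rescaling this too is covered by \cite{Shayman86}); given that the rescaling argument already delivers all four parts, I would lead with it and relegate the coordinate computation to a consistency check.
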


\begin{rem}
It is also true that the vector field on $\Lambda(2)$ induced by (\ref{constant coeff system}) is Morse-Smale.
\end{rem}

\bibliographystyle{amsplain}
\bibliography{FHN_Maslov2}

\end{document}